\numberwithin{equation}{section}
\definecolor{dblue}{rgb}{0,0,0.45}
\definecolor{red}{rgb}{0.7,0,0}
\newtheorem{theorem}{Theorem}[section]
\newtheorem{lemma}[theorem]{Lemma}
\newtheorem*{lemma*}{Lemma}
\newtheorem{proposition}[theorem]{Proposition}
\theoremstyle{definition}
\newtheorem{remark}[theorem]{Remark}
\theoremstyle{remark}
\begin{document}

\begin{frontmatter}

\title{Averaging principle for McKean-Vlasov SDEs driven by multiplicative fractional noise with highly oscillatory drift coefficient 
}

\author[mymainaddress,mysecondaddress]{Bin \textsc{Pei}}
\ead{binpei@nwpu.edu.cn}

\author[mymainaddress]{Lifang \textsc{Feng}}
\ead{FLF.fenglifang@outlook.com}

\author[mymainaddress]{Min \textsc{Han}}
\ead{mhan2019@hotmail.com}

\address[mymainaddress]{School of Mathematics and Statistics, Northwestern Polytechnical University,   Xi’an, 710072, China}
\address[mysecondaddress]{Chongqing Technology Innovation Center, Northwestern Polytechnical University,   Chongqing, 401120, China}

\begin{abstract}
In this paper, we study averaging principle for a class of McKean-Vlasov stochastic differential equations (SDEs) that contain multiplicative fractional noise with Hurst parameter  $H > $ 1/2 and highly oscillatory drift coefficient. Here the integral corresponding to fractional Brownian motion is the generalized Riemann-Stieltjes integral. Using Khasminskii's time discretization techniques, we prove that the solution of the original system strongly converges to the solution of averaging system as the times scale $ \epsilon $ gose to zero in the supremum- and H\"older-topologies which are sharpen existing ones in the classical Mckean-Vlasov SDEs framework.
 
\vskip 0.08in
\noindent{\bf Keywords.}
Multiplicative fractional noise, highly oscillatory drift, stochastic averaging, McKean-Vlasov SDEs
\vskip 0.08in
\noindent {\bf Mathematics subject classification.} 60G22, 60H10, 60H05, 34C29	
\end{abstract}
\end{frontmatter}
\section{Introduction}\label{se-1}
The present paper focuses on the following McKean-Vlasov stochastic differential equations (SDEs) with highly oscillatory drift coefficient driven by multiplicative fractional noise in related path spaces, namely with supremum- and H\"older-topologies
\begin{equation}\label{mvsde1}
\mathrm{d}X_{t}^{\epsilon}=b({t}/{\epsilon},X_{t}^{\epsilon},\mathscr{L}_{X_{t}^{\epsilon}})\mathrm{d}t+\sigma(X_{t}^{\epsilon})\mathrm{d}B_{t}^{H},\quad X_{0}^{\epsilon}=x_0,\quad t\in[0,T]
\end{equation}
where the parameter $ 0 <\epsilon \ll 1 $, $ x_0 \in \mathbb{R} $ is arbitrary and non-random but fixed and the coefficients $ b:[0,T]\times\mathbb{R}\times\mathcal{P}_{2}(\mathbb{R})\rightarrow\mathbb{R} $ and $ \sigma:\mathbb{R}\rightarrow\mathbb{R} $ are measurable functions and $ \mathscr{L}_{X_{t}^{\epsilon}} $ is the law of $ X_{t}^{\epsilon} $. Here $ \mathcal{P}_{2}(\mathbb{R}) $ is the space of probability measures on $ \mathbb{R} $ with finite $ 2 $-th moment which will be introduced in Section \ref{se-2}. $\{B_t^H,t\geq 0\}$ is one dimensional fractional Brownian motion (FBM) with Hurst parameter $H>1/2$ which is a Gaussian centered process with the covariance function 
$$R_H(t,s)=\frac12 (t^{2H}+s^{2H}-|t-s|^{2H}).$$

The McKean-Vlasov SDEs, also known as distribution dependent SDEs or mean-field SDEs, whose evolution is determined by both the microcosmic location and the macrocosmic distribution of the particle, see e.g.  \cite{kac1956foundations}, \cite{mckean1966class}, \cite{braun1977vlasov}, can better describe many models than classical SDEs as their coefficients depend on the law of the solution.
Such kind of stochastic systems (\ref{mvsde1}) are of independent interest and appear widely in applications including granular materials dynamics, mean-field games, as well as complex networked systems, see e.g. \cite{bolley2013uniform}, \cite{lasry2007mean}, \cite{hu2021mean}.

Now, we remind the reader what an averaging principle is. Since the highly oscillating 
component,  it is relatively difficult to solve (\ref{mvsde1}). The main goal of the averaging principle
to find a simplified system which simulates and predicts the
evolution of the original system (\ref{mvsde1}) over a long time scale by averaging the highly oscillating drift coefficient under some suitable conditions. The history of averaging principle for deterministic systems is long
which can be traced back to the result by Krylov, Bogolyubov
and Mitropolsky, see e.g. \cite{krylov1950introduction}, \cite{bogoliubov1961asymptotic}. After that, \cite{khasminskii1968principle} established an averaging principle for the SDEs driven by Brownian motion (BM). Up to now, there have existed some kind of methods, such as the techniques of time discretization and Poisson equation, the weak convergence method,  studing averaging principle, see e.g. \cite{liu2010strong},
\cite{liu2020averaging}, \cite{pardoux2001poisson}, \cite{pei2020averaging}, \cite{sun2022strong} for SDEs,
and see e.g. \cite{cerrai2009averaging}, \cite{fu2011strong}, \cite{dong2018averaging}, \cite{gao2018averaging} for
stochastic partial differential equations (SPDEs).

In recent years there has been considerable research interest in averaging for Mckean-Vlasov stochastic (partial) differential equations S(P)DEs. \cite{rockner2021well} established the averaging principle for slow-fast Mckean-Vlasov SDEs by the techniques of time discretization and Poisson equation. \cite{hong2022strong} investigated the strong convergence rate of averaging principle for slow-fast Mckean-Vlasov
SPDEs based on the variational approach and the technique of time discretization. \cite{cheng2022strong} studied averaging principle for distribution dependent SDEs with localized $L^p$ drift using Zvonkin's transformation and estimates for Kolmogorov
equations. \cite{shen2022averaging} obtained the strong convergence without a
rate for distribution dependent SDEs with highly oscillating component driven by FBM and
standard BM, which requires that the FBM-term should be additive case.

However, the aforementioned references all focused on the Mckean-Vlasov S(P)DEs with addictive noise or multiplicative white noise. Up to now, there are no work concentrating on averaging for Mckean-Vlasov SDEs driven by multiplicative fractional noise. In this work, we aim to close this gap. It is known that the FBMs are not semimartingales. Therefore, the beautiful classical stochastic analysis is not applicable to fractional noises for $H\neq 1/2$. It is a non-trivial task to extent the results in the classical stochastic analysis to these multiplicative fractional noises while one can use Wiener integral for the addictive fractional noise because the diffusion term is a dererministic function.  Note that the diffusion term of Mckean-Vlasov SDEs  in this paper is state variables-dependent, based on Riemann Stieltjes integral framework, we cannot
use Gronwall's lemma or generalized Gronwall’s lemma directly to prove the convergence of $X^\epsilon$ to $X$ as in \cite{mishura2011stochastic}, \cite{mishura2012mixed}. So, we will use the $\lambda$-equivalent H\"older norm (see, Section 4.3 ) to overcome this problem.

From the above motivations, we consider the strong convergence of averaging principle
for Mckean-Vlasov SDEs driven by multiplicative fractional noise in the present paper. The problem is solved by the fractional approach and Khasminskii type averaging principle efficiently. Moreover, our averaging result in the supremum- and H\"older-topologies sharpen existing ones in the classical Mckean-Vlasov S(P)DEs framework.

The paper is organized as follows. Section \ref{se-2} presents some necessary notations and assumptions. Stochastic averaging principles for such McKean-Vlasov SDEs are then established in Section \ref{se-3}. Note that $ C $ and $ C_{\mathrm{x}} $ denote some positive constants which may change from line to line throughout this paper, where $ \mathrm{x} $ is one or more than one parameter and $ C_{\mathrm{x}} $ is used to emphasize that the constant depends on the corresponding parameter, for example, $ C_{\alpha,\beta,\gamma,T,|x_0|} $ depends on $ \alpha,\beta,\gamma,T $ and $ |x_0| $.
\section{Preliminaries}\label{se-2}
In this section, we will recall some basic facts on definitions and properties of the fractional caculus. For more
details, we refer to \cite{guerra2008stochastic} and \cite{rascanu2002differential}.  Firstly, we now introduce some necessary spaces and norms. In what follows of the rest of this section , let $a,b\in \mathbb{R}, a<b$.
For $\gamma \in(0,1)$, let $C^\gamma((a,b),\mathbb{R})$ be the space of $\gamma$-H\"{o}lder continuous functions $f:[a,b]\rightarrow\mathbb{R}$, equipped with the the norm $$
\|f\|_{\gamma,a,b} =\|f\|_{\infty,a,b}+\||f\||_{\gamma,a,b}$$
with
$$\|f\|_{\infty,a,b}:= \sup _{t \in[a, b]}|f(t)|,\quad\||f\||_{\gamma,a,b}=\sup_{a \leq s < t \leq b}\frac{|f(t)-f(s)|}{(t-s)^{\gamma}}.$$
  For simplify, let $
  \|f\|_{\beta} :=\|f\|_{\beta,0,T},\|f\|_{\infty}:=\|f\|_{\infty,0,T}$ and $\||f\||_{\beta}:=\||f\||_{\beta,0,T} $.

The following proposition provides an explicit expression for the integral $ \int_{a}^{b} f \mathrm{d} g $ when $f\in C^\gamma((a,b),\mathbb{R})$ and $g\in C^\beta((a,b),\mathbb{R})$ with $\beta+\gamma>1, \beta, \gamma \in (0,1)$  in terms of fractional derivatives, see \cite{zahle1998integration}.
\begin{proposition}
{\rm (Remark 4.1 in \citeauthor{rascanu2002differential}, \citeyear{rascanu2002differential}).} Suppose that $f\in C^\gamma((a,b),\mathbb{R})$ and $g\in C^\beta((a,b),\mathbb{R})$ with $\beta+\gamma>1, \beta, \gamma \in (0,1)$. Let $\alpha \in (0,1)$, $\gamma > \alpha$ and $\beta > 1- \alpha$. Then the Riemann Stieltjes integral $ \int_{a}^{b} f \mathrm{d} g $ exists and it can be expressed as
\begin{equation}\label{grsinteg}
	\int_{a}^{b} f \mathrm{d} g =(-1)^{\alpha} \int_{a}^{b} D_{a+}^{\alpha}f(t) D_{b-}^{1-\alpha}g_{b-}(t)\mathrm{d}t
\end{equation}
where $ g_{b-}(t)=g(t)-g(b) $ and for $ a\leq t\leq b $ the Weyl derivatives of $ f $ are defined by formulas
\begin{eqnarray*}	
    D_{a+}^\alpha f(t)
    &=\frac{1}{\Gamma(1-\alpha)}\bigg(\frac{f(t)}{(t-a)^\alpha}+\alpha\int_a^t\frac{f(t)-f(s)}{(t-s)^{\alpha+1}}\mathrm{d}s\bigg)\label{wleft},\\
    D_{b-}^{\alpha} f (t)
    &=\frac{(-1)^{\alpha}}{\Gamma(1-\alpha)}\bigg(\frac{f(t)}{(b-t)^{\alpha}}+\alpha\int_t^b\frac{f(t)-f(s)}{(s-t)^{\alpha+1}}\mathrm{d}s\bigg)\label{wright}
\end{eqnarray*}
and $ \Gamma $ denotes the Gamma function. 
\end{proposition}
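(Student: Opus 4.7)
The plan is to follow Z\"ahle's original approach via fractional integration by parts, organized into three stages: verify well-definedness of the right-hand side of (2.1), establish the identity for smooth integrands, and extend by density.

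For well-definedness, using $g\in C^\beta$ with $\beta>1-\alpha$, the estimate $|g_{b-}(t)|\leq\||g\||_{\beta,a,b}(b-t)^\beta$ shows that the boundary term $g_{b-}(t)/(b-t)^{1-\alpha}$ is of order $(b-t)^{\beta+\alpha-1}$ and hence bounded; a similar H\"older-based argument controls the integral term in $D_{b-}^{1-\alpha}g_{b-}$, so that $D_{b-}^{1-\alpha}g_{b-}\in L^\infty[a,b]$. For $D_{a+}^\alpha f$, the H\"older condition $\gamma>\alpha$ yields a pointwise bound on the integral term, while the boundary term $f(t)/(t-a)^\alpha$ contributes at worst an $L^1$ singularity at $t=a$. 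The integrand on the right-hand side of (2.1) therefore lies in $L^1[a,b]$.

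For smooth $f,g\in C^1[a,b]$, I would derive the identity from the fractional integration-by-parts rule
\begin{equation*}
\int_a^b D_{a+}^\alpha\varphi(t)\,\psi(t)\,\mathrm{d}t=\int_a^b\varphi(t)\,D_{b-}^\alpha\psi(t)\,\mathrm{d}t,
\end{equation*}
combined with the identity, obtained directly from the definitions, that expresses $D_{b-}^{1-\alpha}g_{b-}$ as a right-sided fractional integral of $g'$. These together collapse the right-hand side of (2.1) to $\int_a^b f(t)\,g'(t)\,\mathrm{d}t$, which agrees with the Riemann--Stieltjes integral in the $C^1$ case. Finally, I would approximate general H\"older $f,g$ by smooth $f_n,g_n$ having uniformly bounded $C^\gamma$ and $C^\beta$ norms respectively, together with pointwise a.e.\ convergence of the associated Weyl derivatives (for instance by extending the functions to a slightly enlarged interval and convolving with a smooth mollifier). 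Young's classical estimate passes the left-hand side to $\int_a^b f\,\mathrm{d}g$, while the uniform bounds from the first stage furnish an $L^1$ majorant so that dominated convergence handles the right-hand side.

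The main obstacle is the final approximation step: constructing smooth $f_n,g_n$ whose Weyl derivatives converge a.e.\ \emph{and} admit a uniform integrable majorant requires careful treatment of the endpoint singularities, since the Weyl derivatives depend explicitly on the boundary values $f(t)/(t-a)^\alpha$ and $g_{b-}(t)/(b-t)^{1-\alpha}$. This is the technical heart of the result, essentially as in Z\"ahle's original argument.
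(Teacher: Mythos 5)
The paper itself contains no proof of this proposition: it is imported by citation from Remark 4.1 of Nualart and R\u{a}\c{s}canu (2002), which in turn rests on Z\"ahle (1998), so there is no in-paper argument to compare against. Your three-stage outline is precisely the argument of the cited source and is correct as a sketch: the well-definedness stage is right (the boundary term of $D_{b-}^{1-\alpha}g_{b-}$ is of order $(b-t)^{\alpha+\beta-1}$, bounded since $\beta>1-\alpha$, while $\gamma>\alpha$ makes $D_{a+}^{\alpha}f$ integrable despite the $f(t)/(t-a)^{\alpha}$ singularity); the smooth case does collapse to $\int_a^b f(t)g'(t)\,\mathrm{d}t$ via the fractional integration-by-parts rule combined with $D_{b-}^{1-\alpha}g_{b-}=-I_{b-}^{\alpha}g'$ and the composition $D_{b-}^{\alpha}I_{b-}^{\alpha}=\mathrm{id}$; and the density step, with mollification after extension beyond $[a,b]$, uniform H\"older bounds, Young's continuity estimate on the left-hand side and dominated convergence on the right, is the standard completion, the endpoint-singularity issue you flag being exactly the technical point handled in Z\"ahle's original paper.
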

\begin{lemma}\label{intfbm}{\rm (Theorem 2 in \citeauthor{hu2007differential}, \citeyear{hu2007differential})}
	Suppose that $f\in C^\gamma([0, T],\mathbb{R})$ and $g\in C^\beta([0, T],\mathbb{R})$ with $\beta+\gamma>1$ and $1>\gamma>\alpha,\,1>\beta>1-\alpha$, for all $ s, t \in[0, T] $, one has
	\begin{eqnarray*}
		\bigg|\int_{s}^{t}f(r)\mathrm{d}g(r)\bigg|
		\leq
		C_{\alpha,\beta,\gamma,T}\|f\|_{\gamma,s,t}\||g\||_{\beta}(t-s)^{\beta}.
	\end{eqnarray*}
\end{lemma}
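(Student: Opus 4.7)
The plan is to prove the lemma by invoking the fractional-derivative representation \eqref{grsinteg} of the Riemann--Stieltjes integral on the sub-interval $[s,t]$ and then bounding the two Weyl derivatives pointwise. First I would write
\begin{equation*}
\int_s^t f(r)\,\mathrm{d}g(r)=(-1)^\alpha\int_s^t D_{s+}^\alpha f(r)\,D_{t-}^{1-\alpha}g_{t-}(r)\,\mathrm{d}r,
\end{equation*}
where $g_{t-}(r)=g(r)-g(t)$, which is legal because of the assumptions $\gamma>\alpha$ and $\beta>1-\alpha$ together with $\beta+\gamma>1$.

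Next I would estimate each Weyl derivative separately. For the left derivative of $f$, I use $|f(r)|\le\|f\|_{\infty,s,t}\le\|f\|_{\gamma,s,t}$ and the H\"older bound $|f(r)-f(u)|\le\||f\||_{\gamma,s,t}(r-u)^\gamma$ inside the integral, obtaining
\begin{equation*}
|D_{s+}^\alpha f(r)|\le \frac{1}{\Gamma(1-\alpha)}\Bigl(\|f\|_{\gamma,s,t}(r-s)^{-\alpha}+\tfrac{\alpha}{\gamma-\alpha}\||f\||_{\gamma,s,t}(r-s)^{\gamma-\alpha}\Bigr).
\end{equation*}
For the right derivative of $g_{t-}$, since $g_{t-}(t)=0$ one has $|g_{t-}(r)|=|g(r)-g(t)|\le\||g\||_\beta(t-r)^\beta$, and the inner increment satisfies $|g(u)-g(r)|\le\||g\||_\beta(u-r)^\beta$, giving
\begin{equation*}
|D_{t-}^{1-\alpha}g_{t-}(r)|\le \frac{1}{\Gamma(\alpha)}\Bigl(1+\tfrac{1-\alpha}{\alpha+\beta-1}\Bigr)\||g\||_\beta(t-r)^{\alpha+\beta-1}.
\end{equation*}
The condition $\beta>1-\alpha$ is exactly what makes the exponent $\alpha+\beta-1$ positive (or at least keeps the integral in $u$ convergent).

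In the third step I multiply these bounds and integrate in $r$, which reduces to two Beta-type integrals:
\begin{equation*}
\int_s^t(r-s)^{-\alpha}(t-r)^{\alpha+\beta-1}\mathrm{d}r=B(1-\alpha,\alpha+\beta)(t-s)^\beta,
\end{equation*}
\begin{equation*}
\int_s^t(r-s)^{\gamma-\alpha}(t-r)^{\alpha+\beta-1}\mathrm{d}r=B(\gamma-\alpha+1,\alpha+\beta)(t-s)^{\beta+\gamma}.
\end{equation*}
Collecting the constants into a single $C_{\alpha,\beta,\gamma}$ I obtain a bound of the form $C_{\alpha,\beta,\gamma}\|f\|_{\gamma,s,t}\||g\||_\beta\bigl[(t-s)^\beta+(t-s)^{\beta+\gamma}\bigr]$. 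Finally I factor out $(t-s)^\beta$ and estimate the remaining $(t-s)^\gamma\le T^\gamma$, absorbing $T^\gamma$ into the constant to produce the desired $C_{\alpha,\beta,\gamma,T}\|f\|_{\gamma,s,t}\||g\||_\beta(t-s)^\beta$.

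The only delicate point is handling the two singularities at the endpoints of the integral simultaneously; the H\"older exponent assumptions $\gamma>\alpha$ and $\beta>1-\alpha$ guarantee that each Weyl derivative is integrable, and $\beta+\gamma>1$ ensures that the product is integrable with a controllable power of $(t-s)$. Everything else is bookkeeping with the Beta function.
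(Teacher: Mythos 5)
Your proposal is correct and follows essentially the same route as the paper: both invoke the fractional-derivative representation \eqref{grsinteg} on $[s,t]$, bound $|D_{s+}^{\alpha}f(r)|$ by a multiple of $\|f\|_{\gamma,s,t}(r-s)^{-\alpha}$ and $|D_{t-}^{1-\alpha}g_{t-}(r)|$ by a multiple of $\||g\||_{\beta}(t-r)^{\alpha+\beta-1}$, and conclude via the Beta integral $\int_s^t(r-s)^{-\alpha}(t-r)^{\alpha+\beta-1}\mathrm{d}r\le C(t-s)^{\beta}$. The only cosmetic difference is that the paper absorbs the increment term $(r-s)^{\gamma-\alpha}\le T^{\gamma}(r-s)^{-\alpha}$ into a single kernel before integrating, while you carry two Beta integrals and absorb the factor $(t-s)^{\gamma}\le T^{\gamma}$ at the end.
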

\begin{proof}
	By using the fractional integration given in (\ref{grsinteg}), for all $ s, t \in[0, T] $, we have
	\begin{eqnarray*}
		\bigg|\int_{s}^{t}f(r)\mathrm{d}g(r)\bigg|
		&\leq&
		\int_{s}^{t}\big|D_{s+}^{\alpha}f(r)\big|\cdot \big| D_{t-}^{1-\alpha}g_{t-}(r)\big|\mathrm{d}r\cr
&\leq & \int_{s}^{t} C_{\alpha,\gamma,T}\|f\|_{\gamma,s,t}(r-s)^{-\alpha} C_{\alpha,\beta}\||g\||_{\beta}(t-r)^{\alpha+\beta-1}\mathrm{d}r\cr
&\leq &C_{\alpha,\beta,\gamma,T}\|f\|_{\gamma,s,t}\||g\||_{\beta} \int_{s}^{t} (r-s)^{-\alpha} (t-r)^{\alpha+\beta-1}\mathrm{d}r\cr
&\leq&
		C_{\alpha,\beta,\gamma,T}\|f\|_{\gamma,s,t}\||g\||_{\beta}(t-s)^{\beta}.
	\end{eqnarray*}
This completes the proof.
\end{proof}

\begin{remark}\label{fbmito1}{\rm (Lemma 7.5 in \citeauthor{rascanu2002differential}, \citeyear{rascanu2002differential})}
	The trajectories of $B^H$ are locally $\beta$-H\"{o}lder
		continuous a.s. for all  $\beta \in (0,H)$ and $\||B^H\||_{\beta}$ has moments of all order.
\end{remark}

\begin{remark}\label{fbmito}
	Suppose that $f\in C^\gamma([0, T],\mathbb{R})$ and $B^H\in C^\beta([0, T],\mathbb{R})$ with $\beta+\gamma>1$ and $1>\gamma>\alpha,\, 1>\beta>1-\alpha$, for all $ s, t \in[0, T] $, one has
	\begin{eqnarray*}
		\bigg|\int_{s}^{t}f(r)\mathrm{d}B^H_r\bigg|
		\leq
		C_{\alpha,\beta,\gamma,T}\|f\|_{\gamma,s,t}\||B^H\||_{\beta}(t-s)^{\beta}, \,\,\, {\rm a.s.}
	\end{eqnarray*}
\end{remark}

\begin{lemma}\label{lemma2.5}
	For any positive constants $ a $, $ d $, if $ a, d<1 $, one has
	\begin{eqnarray*}
		\int_{s}^{t}(r-s)^{-a}(t-r)^{-d}\mathrm{d}r
		\leq
		(t-s)^{1-a-d}B(1-a,1-d)
	\end{eqnarray*}
    where $ s\in(0,t) $ and $ \mathit{B} $ is the Beta function.
\end{lemma}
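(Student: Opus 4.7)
The plan is to reduce the integral to the standard Beta integral by an affine change of variables that normalizes the interval $[s,t]$ to $[0,1]$. Specifically, I would substitute $r = s + u(t-s)$ with $u \in [0,1]$, so that $\mathrm{d}r = (t-s)\mathrm{d}u$, $r - s = u(t-s)$, and $t - r = (1-u)(t-s)$.

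Under this substitution, the integrand becomes $(u(t-s))^{-a}((1-u)(t-s))^{-d}$, and pulling out the powers of $(t-s)$ gives
\begin{equation*}
\int_{s}^{t}(r-s)^{-a}(t-r)^{-d}\mathrm{d}r = (t-s)^{1-a-d}\int_0^1 u^{-a}(1-u)^{-d}\mathrm{d}u.
\end{equation*}
The remaining integral is, by definition, the Beta function $B(1-a,1-d)$, so in fact the stated inequality holds with equality. The hypotheses $a<1$ and $d<1$ ensure $1-a>0$ and $1-d>0$, which is precisely what is needed for the Beta integral $\int_0^1 u^{-a}(1-u)^{-d}\mathrm{d}u$ to converge. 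Since the lemma is stated as an inequality, no further work is required.

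There is essentially no main obstacle here; the only care is in checking that the exponents $-a$ and $-d$ on the normalized integrand are integrable at the endpoints $0$ and $1$, which is exactly the condition the lemma imposes. This elementary identity is what will later allow Beta-function estimates to appear when bounding fractional-derivative kernels of the form arising in Lemma \ref{intfbm}.
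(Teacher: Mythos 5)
Your proof is correct and is essentially identical to the paper's: both use the affine substitution $y=(r-s)/(t-s)$ to reduce the integral to the standard Beta integral, obtaining equality with $(t-s)^{1-a-d}B(1-a,1-d)$. Your added remark on convergence of the Beta integral under $a,d<1$ is a minor (and accurate) refinement the paper leaves implicit.
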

\begin{proof}
	By a change of variable $ y=(r-s)/(t-s) $, we have
	\begin{eqnarray*}
		\int_{s}^{t}(r-s)^{-a}(t-r)^{-d}\mathrm{d}r
		&=&
		\int_{0}^{1}(y(t-s))^{-a}(t-s-y(t-s))^{-d}(t-s)\mathrm{d}y\cr
		&=&
		(t-s)^{1-a-d}\int_{0}^{1}y^{-a}(1-y)^{-d}\mathrm{d}y\cr
		&=&
		(t-s)^{1-a-d}B(1-a,1-d).
	\end{eqnarray*}
This completes the proof.
\end{proof}

Let $ \mathcal{P}(\mathbb{R}) $ be the collection of all probability measures on $ \mathbb{R} $,	and $ \mathcal{P}_{2}(\mathbb{R}) $ be the space of probability measures on $ \mathbb{R} $ with finite $ 2 $-th moment, i.e.,
$$  \mathcal{P}_{2}(\mathbb{R})=\bigg\{\mu\in\mathcal{P}(\mathbb{R}):\;\mu(|\cdot|^2):=\int_{\mathbb{R}}|x|^2\mu(\mathrm{d}x)<\infty\bigg\}.  $$

 We define the $ L^{2} $-Wasserstein distance on $ \mathcal{P}_{2}(\mathbb{R}) $ by $$
\mathbb{W}_{2}\left(\mu_{1}, \mu_{2}\right):=\inf _{\pi \in \mathcal{C}_{\mu_{1}, \mu_{2}}}\left(\int_{\mathbb{R} \times \mathbb{R}}|x-y|^{2} \pi(d x, d y)\right)^{1 / 2}, \quad \mu_{1}, \mu_{2} \in \mathcal{P}_{2}(\mathbb{R})
$$where $\mathcal{C}_{\mu_{1}, \mu_{2}}$ is the set of probability measures on $ \mathbb{R} \times \mathbb{R} $ with marginals $\mu_{1}$ and $\mu_{2}$. It is well-known that $ (\mathcal{P}_{2}(\mathbb{R}),\mathbb{W}_{2}) $ is a Polish space.

Note that for any  $x \in \mathbb{R}$, the Dirac measure $\delta_{x}$  belongs to  $\mathcal{P}_{2}(\mathbb{R})$, specially $ \delta_0 $ is the Dirac measure at point 0 and if  $\mu_{1}=\mathscr{L}_X, \mu_{2}=\mathscr{L}_Y$  are the corresponding distributions of random variables  $X$  and $Y$  respectively, then$$
\mathbb{W}_{2}^{2}(\mu_{1}, \mu_{2}) \leq \int_{\mathbb{R}\times \mathbb{R}}|x-y|^{2} \mathscr{L}_{(X, Y)}(d x, d y)=\mathbb{E}[|X-Y|^{2}]$$in which  $\mathscr{L}_{(X, Y)}$  represents the joint distribution of the random pair $ (X, Y)$. Then for arbitrarily fixed  $T>0$ , let  $C([0, T] ; \mathbb{R})$  be the Banach space of all $ \mathbb{R}$-valued continuous functions on
$ [0, T]$, endowing with the supremum norm. Furthermore, we let $ L^{2}(\Omega ; C([0, T] ; \mathbb{R}))$  be the totality of $ C([0, T] ; \mathbb{R})$-valued random variables  $X $ satisfying  $\mathbb{E}[\sup _{0 \leq t \leq T}|X(t)|^{2}]<\infty$. Then,  $L^{2}(\Omega ; C([0, T] ; \mathbb{R}))$  is a Banach space under the norm
$$
\|X\|_{L^{2}}:=\Big(\mathbb{E}\Big[\sup _{0 \leq t \leq T}|X(t)|^{2}\Big]\Big)^{1/2}.$$

\section{Assumptions and main result}\label{se-3}
\subsection{Assumptions} 
To derive a unique solution to (\ref{mvsde1}), we first introduce assumptions on the coefficients $b$ and $\sigma$ such that
	\begin{enumerate}[(H1)] 
		\item There exists a constant $L_b>0$, such that for any $ t\in [0, T] $, $x_{1}, x_{2}\in \mathbb{R} $ and $\mu_{1}, \mu_{2}\in \mathcal{P}_{2}(\mathbb{R}) $,
		\begin{align*}
		|b(t,x_{1},\mu_{1})-b(t,x_{2},\mu_{2})|
		\leq L_b \big(|x_{1}-x_{2}|+\mathbb{W}_{2}(\mu_{1},\mu_{2})\big).
		\end{align*}
		Moreover, $ b $ is bounded by a positive constant $ M_b $, i.e., $$ \sup_{(t,x,\mu)\in[0,T]\times\mathbb{R}\times\mathcal{P}_{2}(\mathbb{R})}|b(t,x,\mu)|\leq M_b. $$
	\end{enumerate}
    \begin{enumerate}[(H2)] 
    	\item There exist constants $M_\sigma>K_{\sigma} >0$ and $L_{\sigma}>0$ such that for any $x, x_{1}, x_{2}\in \mathbb{R} $
    	\begin{align*}
    	|\sigma(x_1)-\sigma(x_2)| \leq L_{\sigma} |x_{1}-x_{2}| \, {\rm and}\,\,
    	K_{\sigma} \leq|\sigma(x)|\leq M_\sigma.
    	\end{align*}
    \end{enumerate}

Under assumptions (H1) and (H2) above, one can deduce from Theorem 3.3 in \cite{fan2022distribution} that the system (\ref{mvsde1}) admits a unique solution via a Lamperti transform.
\begin{lemma}\label{lemma1}
	Suppose that {\rm (H1)} and {\rm (H2)} hold and $ 1/2 < H <1 $, then {\rm (\ref{mvsde1})} has a unique solution $ X\in L^{2}(\Omega ; C([0, T] ; \mathbb{R}))$.
\end{lemma}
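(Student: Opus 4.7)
The plan is to apply a Lamperti transform to convert (\ref{mvsde1}) into a McKean-Vlasov SDE driven by \emph{additive} fractional noise, for which the well-posedness result cited in \cite{fan2022distribution} applies directly. Concretely, I would define $F(x):=\int_0^x \sigma(u)^{-1}\mathrm{d}u$. By (H2), $F\in C^1(\mathbb{R})$ is strictly monotone with $1/M_\sigma\le F'(x)\le 1/K_\sigma$, so $F$ is a global bi-Lipschitz diffeomorphism of $\mathbb{R}$, and since $\sigma$ is Lipschitz and bounded away from zero, $F'=1/\sigma$ is Lipschitz as well.

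Next, I would set $Y_t:=F(X_t^\epsilon)$ and apply the Young change-of-variables formula (which is legitimate because the signal $B^H$ has H\"older exponent $>1/2$ and any candidate solution $X^\epsilon$ of (\ref{mvsde1}) is $\beta$-H\"older with $\beta+H>1$, so $F'(X_t^\epsilon)\,\mathrm{d}X_t^\epsilon$ is a Young integral). This formally produces
\begin{equation*}
\mathrm{d}Y_t=\tilde b(t/\epsilon,Y_t,\mathscr{L}_{Y_t})\,\mathrm{d}t+\mathrm{d}B_t^H,\qquad Y_0=F(x_0),
\end{equation*}
with the new drift $\tilde b(t,y,\nu):=\sigma(F^{-1}(y))^{-1}\,b\bigl(t,F^{-1}(y),\nu\circ F\bigr)$, where I use the pushforward identity $\mathscr{L}_{X_t^\epsilon}=\mathscr{L}_{Y_t}\circ F$. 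The transformed drift inherits the good properties of $b$: boundedness follows from $|b|\le M_b$ and $|\sigma|^{-1}\le 1/K_\sigma$, while Lipschitz continuity in $y$ is a consequence of (H2) combined with the boundedness of $F^{-1}$'s derivative, and Lipschitz continuity in the measure argument uses the elementary bound $\mathbb{W}_2(\nu_1\circ F,\nu_2\circ F)\le (1/K_\sigma)\,\mathbb{W}_2(\nu_1,\nu_2)$ coming from the bi-Lipschitz property of $F$.

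For the resulting additive-noise McKean-Vlasov SDE with bounded Lipschitz drift driven by FBM with $H>1/2$, Theorem 3.3 of \cite{fan2022distribution} yields a unique solution $Y\in L^2(\Omega;C([0,T];\mathbb{R}))$. Inverting the Lamperti map, $X_t^\epsilon:=F^{-1}(Y_t)$, produces a unique solution to the original equation; the $L^2$-bound transfers because $F^{-1}$ is globally Lipschitz with constant $M_\sigma$, giving $|X_t^\epsilon|\le |x_0|+M_\sigma|Y_t-Y_0|$, and $Y$ together with the finite moments of $\||B^H\||_\beta$ from Remark \ref{fbmito1} make the right-hand side square-integrable in $\sup_t$.

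The main obstacle is justifying that the Young chain rule and the pushforward identification are internally consistent in the McKean-Vlasov setting, i.e.\ that a solution of the transformed equation actually corresponds to a solution of (\ref{mvsde1}) and vice versa; this is essentially bookkeeping once one fixes a common probability space, because $F$ is a deterministic homeomorphism and the laws transform covariantly. Beyond this, the argument is mechanical, which is why the author's lemma is phrased as a direct consequence of \cite{fan2022distribution}.
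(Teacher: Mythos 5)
Your proposal is correct and follows essentially the same route as the paper, whose entire proof of this lemma is the observation that well-posedness follows from Theorem 3.3 of \cite{fan2022distribution} via a Lamperti transform --- you have simply filled in the details (definition of $F$, the Young chain rule, the transformed drift $\tilde b$, and the transfer of the $L^2$-bound) that the paper leaves implicit. Two cosmetic corrections that do not affect the argument: since (H2) only bounds $|\sigma|$, continuity of $\sigma$ forces it to have a fixed sign (take $\sigma \ge K_\sigma$ without loss of generality) before writing $1/M_\sigma \le F' \le 1/K_\sigma$; and the map $\nu \mapsto \nu \circ F$ is the pushforward by $F^{-1}$, so its Wasserstein Lipschitz constant is $\mathrm{Lip}(F^{-1}) = M_\sigma$ rather than $1/K_\sigma$.
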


In order to establish the averaging principle, besides conditions (H1) and (H2), we further assume:
	\begin{enumerate}
		\item [(H3)] $ b $ is Lipschitz continuous respect to $ t $, i.e.,  there exists a positive constant $ L^\prime_b $, such that for any $ t\in [0, T] $, $x\in \mathbb{R} $ and $\mu \in \mathcal{P}_{2}(\mathbb{R}) $,
		\begin{align*}
		|b(t_1,x,\mu)-b(t_2,x,\mu)|
		\leq L^\prime_b|t_{1}-t_{2}|.
		\end{align*}
		\item [(H4)] The function $ \sigma(x) $ is of class $C^1(\mathbb{R})$. There exists a constant $M'_\sigma>0$ such that for any $ x, x_{1}, x_{2}\in \mathbb{R} $,
			\begin{align*}
			|\nabla \sigma(x_1)-\nabla \sigma(x_2)| \leq M'_\sigma |x_{1}-x_{2}| \, {\rm and}\,\,
			|\nabla \sigma(x)|\leq M'_\sigma
			\end{align*}
hold.
			Here, $ \nabla $ is the standard gradient operator on $\mathbb{R}$.
\item [(H5)] There exist a bounded positive function $ \varphi:\mathbb{R}^{+}\rightarrow\mathbb{R}^{+}$ and a measurable function $ \bar{b}:\mathbb{R}\times\mathcal{P}_{2}(\mathbb{R})\rightarrow\mathbb{R}$, such that for any $ x \in \mathbb{R} $, $ \mu\in \mathcal{P}_{2}(\mathbb{R}) $ it holds that
	\begin{eqnarray*}
		\sup_{t\geq 0}\Bigg|\frac{1}{T}\int_{t}^{t+T}(b(s, x, \mu)-\bar{b}(x, \mu))\mathrm{d}s\Bigg|
		\leq 
		\varphi(T)\big(1+|x|+\mu(|\cdot|^2)\big)
	\end{eqnarray*}
	where $ \varphi(T) $ satisfies $ \lim_{T\rightarrow\infty}\varphi(T)=0 $.
	\end{enumerate}

\begin{remark}\label{remark3.3}
	It follows from the conditions {\rm (H1)} and {\rm (H5)} that $ \bar{b} $ satisfies 
	\begin{align*}
	|\bar{b}(x,\mu)|& \leq L_{\bar{b}}, \\
	|\bar{b}(x_{1},\mu_{1})-\bar{b}(x_{2},\mu_{2})|
	&\leq L_{\bar{b}} \big( |x_{1}-x_{2}|+\mathbb{W}_{2}(\mu_{1},\mu_{2})\big)
	\end{align*}
	for any $ x, x_{1}, x_{2}\in \mathbb{R} $ and $ \mu, \mu_{1}, \mu_{2}\in \mathcal{P}_{2}(\mathbb{R}) $, where $ L_{\bar{b}}  $ is a positive constant.
\end{remark}
\begin{proof}
	We have that
	\begin{eqnarray*}
	|\bar{b}(x,\mu)| 
	&\leq&
	\Bigg|\frac{1}{T}\int_{0}^{T}(b(s, x, \mu)-\bar{b}(x, \mu))\mathrm{d}s\Bigg|+\Bigg|\frac{1}{T}\int_{0}^{T}b(s, x, \mu)\mathrm{d}s\Bigg|\cr
    &\leq&
    \varphi(T)\big(1+|x|+\mu(|\cdot|^2)\big)+M_{b} 
	\end{eqnarray*}
and
    \begin{eqnarray*}
    	|\bar{b}(x_{1},\mu_{1})-\bar{b}(x_{2},\mu_{2})| 
    	&\leq&
    	\Bigg|\frac{1}{T}\int_{0}^{T}(b(s, x_1, \mu_1)-\bar{b}(x_1, \mu_1))\mathrm{d}s\Bigg|
    	+\Bigg|\frac{1}{T}\int_{0}^{T}(b(s, x_2, \mu_2)-\bar{b}(x_2, \mu_2))\mathrm{d}s\Bigg|\cr
    	& &
    	+\Bigg|\frac{1}{T}\int_{0}^{T}(b(s, x_1, \mu_1)-b(s, x_2, \mu_2))\mathrm{d}s\Bigg|\cr
    	&\leq&
    	\varphi(T)\big(1+|x_1|+|x_2|+\mu_1(|\cdot|^2)+\mu_2(|\cdot|^2)\big)+L_b\big(|x_{1}-x_{2}|+\mathbb{W}_{2}(\mu_{1},\mu_{2})\big).
    \end{eqnarray*}

Taking $ T\rightarrow\infty $, there exist a constant $ L_{\bar{b}}>0$ such that Lemma \ref{remark3.3} holds. 
\end{proof}
\begin{remark}\label{remark3.4}
	Noting that
	\begin{eqnarray*}
		\sup_{t\geq 0}\Bigg|\frac{1}{T}\int_{t}^{t+T}(b(s, x, \mu)-\bar{b}(x, \mu))\mathrm{d}s\Bigg|
		\leq 
		\sup_{t\geq 0}\frac{1}{T}\int_{t}^{t+T}|b(s, x, \mu)-\bar{b}(x, \mu)|\mathrm{d}s.
	\end{eqnarray*}
This shows that the averaging condition (H5) is weaker than the following averaging condition
	\begin{eqnarray*}
		\sup_{t\geq 0}\frac{1}{T}\int_{t}^{t+T}|b(s, x, \mu)-\bar{b}(x, \mu)|\mathrm{d}s
		\leq 
		\varphi(T)\big(1+|x|+\mu(|\cdot|^2)\big).
	\end{eqnarray*}
\end{remark}
\subsection{Main result}
Now, we define the averaged equation:
\begin{equation}\label{avermvsde}
\mathrm{d}\bar{X}_{t}=\bar{b}(\bar{X}_{t},\mathscr{L}_{\bar{X}_{t}})\mathrm{d}t+\sigma(\bar{X}_{t})\mathrm{d}B_{t}^{H},\quad \bar{X}_{0}=x_0
\end{equation}
where $\bar b$ has been given in (H5) and using Theorem 3.3 in \cite{fan2022distribution} again, we have the unique solution result to (\ref{avermvsde}).

\begin{lemma}\label{lemma4}
	Suppose that {\rm (H1)}-{\rm (H5)} hold, then Eq. (\ref{avermvsde}) has a unique solution $ \bar{X} \in L^{2}(\Omega ; C([0, T] ; \mathbb{R})) $.
\end{lemma}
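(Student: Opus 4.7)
The plan is to reduce Lemma \ref{lemma4} to the existence and uniqueness result already invoked for Lemma \ref{lemma1}, namely Theorem 3.3 of \cite{fan2022distribution}. The averaged coefficient $\bar b(x,\mu)$ is time-homogeneous, so the structural hypothesis (H1) on the drift collapses to showing that $\bar b$ is bounded and jointly Lipschitz in the state variable $x$ and in the measure argument $\mu$ with respect to the $L^{2}$-Wasserstein distance. This is precisely what Remark \ref{remark3.3} delivers.

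First I would record that $\bar b:\mathbb{R}\times\mathcal{P}_2(\mathbb{R})\to\mathbb{R}$ satisfies $|\bar b(x,\mu)|\le L_{\bar b}$ and $|\bar b(x_1,\mu_1)-\bar b(x_2,\mu_2)|\le L_{\bar b}(|x_1-x_2|+\mathbb{W}_2(\mu_1,\mu_2))$, as established in Remark \ref{remark3.3} by combining (H1) with the averaging condition (H5) and then letting $T\to\infty$. Viewing $\bar b$ as a (trivially time-measurable) drift on $[0,T]\times\mathbb{R}\times\mathcal{P}_2(\mathbb{R})$, it therefore satisfies exactly the hypothesis (H1), with $L_{\bar b}$ playing the role of both $L_b$ and $M_b$. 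The diffusion coefficient $\sigma$ has not changed, so assumption (H2) continues to hold verbatim.

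With these two verifications in hand, Theorem 3.3 of \cite{fan2022distribution}, applied via the Lamperti transform exactly as in the derivation of Lemma \ref{lemma1}, yields a unique solution $\bar X$ to (\ref{avermvsde}) belonging to $L^{2}(\Omega;C([0,T];\mathbb{R}))$, and the proof is complete. I do not expect a substantial obstacle here: the real work has already been absorbed into Remark \ref{remark3.3} (which extracts the structural properties of $\bar b$) and into Lemma \ref{lemma1} (which provides the wellposedness machinery for McKean--Vlasov SDEs driven by multiplicative fractional noise under (H1)--(H2)). The only purpose of Lemma \ref{lemma4} is to make the averaged system a legitimate target for the strong-convergence analysis of Section \ref{se-3}. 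If one preferred to avoid citing \cite{fan2022distribution} at this point, a self-contained Picard iteration on a suitable $\lambda$-equivalent H\"older ball (as alluded to in the introduction's discussion of Section 4.3) would also do the job, using Lemma \ref{intfbm} to control the stochastic integral $\int_0^{\cdot}\sigma(\bar X_s)\,\mathrm{d}B^H_s$ in $\beta$-H\"older norm and Remark \ref{remark3.3} to control the drift term.
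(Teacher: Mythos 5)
Your proposal is correct and matches the paper's own argument: the paper likewise relies on Remark \ref{remark3.3} to give boundedness and Lipschitz continuity of $\bar b$ (so that (H1) holds for the averaged drift while (H2) is unchanged), and then simply applies Theorem 3.3 of \cite{fan2022distribution} again, exactly as for Lemma \ref{lemma1}. No gap to report.
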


\begin{theorem}\label{thm1}
	Suppose that {\rm(H1)-(H5)} hold, then we obtain
	\begin{equation*}
	\lim_{\epsilon\rightarrow0}\mathbb{E}\big[\|X^{\epsilon}-\bar{X}\|_{\gamma}^2\big]=0.
	\end{equation*}
\end{theorem}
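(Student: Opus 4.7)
The plan is to adapt Khasminskii's time-discretization scheme to the Young-integration framework, with the distribution variable treated through the elementary bound $\mathbb{W}_2(\mathscr{L}_X, \mathscr{L}_Y) \le (\mathbb{E}|X-Y|^2)^{1/2}$. As a preliminary step I would establish a priori H\"older estimates: using Lemma~\ref{intfbm}, the boundedness of $b$ in (H1), the Lipschitz and bounded-from-below properties of $\sigma$ in (H2), and the Lamperti-type argument behind Lemma~\ref{lemma1}, one can show that for some $\beta \in (\gamma, H)$ the random variables $\||X^\epsilon\||_\beta$ and $\||\bar X\||_\beta$ are $L^p$-bounded uniformly in $\epsilon$, with the bounds expressed in terms of moments of $\||B^H\||_\beta$ that are finite by Remark~\ref{fbmito1}.

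Next, I would partition $[0,T]$ by a mesh of width $\delta = \delta(\epsilon)$ satisfying $\delta \to 0$ and $\delta/\epsilon \to \infty$, and introduce the auxiliary process
\begin{equation*}
\hat X^\epsilon_t = x_0 + \int_0^t b\bigl(s/\epsilon,\, \hat X^\epsilon_{s(\delta)},\, \mathscr{L}_{\hat X^\epsilon_{s(\delta)}}\bigr) \,\mathrm{d}s + \int_0^t \sigma(\hat X^\epsilon_s)\,\mathrm{d}B^H_s,
\end{equation*}
where $s(\delta) := \lfloor s/\delta\rfloor \delta$ freezes the slow variables block-wise. Using (H1) and Remark~\ref{fbmito}, together with the a priori H\"older estimates above, I expect to obtain $\mathbb{E}[\|X^\epsilon - \hat X^\epsilon\|_\gamma^2] \le C\,\delta^{2\gamma}$, the Wasserstein contribution being absorbed into the pathwise estimate by the remark on $\mathbb{W}_2$ above.

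To compare $\hat X^\epsilon$ with $\bar X$, I would write
\begin{equation*}
\hat X^\epsilon_t - \bar X_t = \int_0^t \bigl(b(s/\epsilon,\hat X^\epsilon_{s(\delta)},\mathscr{L}_{\hat X^\epsilon_{s(\delta)}}) - \bar b(\bar X_s,\mathscr{L}_{\bar X_s})\bigr)\mathrm{d}s + \int_0^t \bigl(\sigma(\hat X^\epsilon_s) - \sigma(\bar X_s)\bigr)\,\mathrm{d}B^H_s,
\end{equation*}
split the drift difference by inserting $\bar b(\hat X^\epsilon_{s(\delta)}, \mathscr{L}_{\hat X^\epsilon_{s(\delta)}})$, and handle the pure fluctuation part block by block by a change of variable $u = s/\epsilon$ that lets me invoke (H5) with horizon $\delta/\epsilon$; the block-wise average error is then $\le \varphi(\delta/\epsilon)(1+|\hat X^\epsilon_{s(\delta)}| + \mathscr{L}_{\hat X^\epsilon_{s(\delta)}}(|\cdot|^2))$. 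The Lipschitz properties of $\bar b$ from Remark~\ref{remark3.3}, of $\sigma$ from (H2), and of $\nabla\sigma$ from (H4) control the remaining differences.

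The main obstacle, as flagged in the introduction, is that Lemma~\ref{intfbm} applied to the stochastic term produces bounds involving the full H\"older norm $\|\sigma(\hat X^\epsilon) - \sigma(\bar X)\|_{\gamma,s,t}$ rather than the supremum, so a direct Gronwall argument fails. I would overcome this with the $\lambda$-equivalent H\"older norm
\begin{equation*}
\|f\|_{\gamma,\lambda} := \sup_{t \in [0,T]} e^{-\lambda t}|f(t)| + \sup_{0 \le s < t \le T} e^{-\lambda t} \frac{|f(t)-f(s)|}{(t-s)^\gamma},
\end{equation*}
which is equivalent to $\|\cdot\|_\gamma$ for each $\lambda \ge 0$ but, for $\lambda$ large enough relative to $\||B^H\||_\beta$, turns the Young-type map generated by $\sigma$ into a strict contraction. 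Taking expectations after absorbing the contractive term should yield a closed estimate of the form $\mathbb{E}[\|X^\epsilon - \bar X\|_\gamma^2] \le C\bigl(\delta^{2\gamma} + \varphi(\delta/\epsilon)^2\bigr)$, and a final choice of $\delta = \delta(\epsilon)$ with $\delta \to 0$ and $\delta/\epsilon \to \infty$ completes the proof. The delicate point will be keeping the $\lambda$-dependence of the constants integrable against the Gaussian tails of $\||B^H\||_\beta$, which I expect to handle by combining a stopping-time argument with the moment bounds from Remark~\ref{fbmito1}.
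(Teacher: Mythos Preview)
Your overall strategy---Khasminskii discretization, the $\lambda$-weighted H\"older norm, and a truncation via $\||B^H\||_\beta$---matches the paper's. But there is a real gap in your second step: the claimed bound $\mathbb{E}[\|X^\epsilon-\hat X^\epsilon\|_\gamma^2]\le C\delta^{2\gamma}$ cannot follow from (H1) and Remark~\ref{fbmito} alone, because the difference $X^\epsilon-\hat X^\epsilon$ contains the term $\int_0^t(\sigma(X^\epsilon_s)-\sigma(\hat X^\epsilon_s))\,\mathrm{d}B^H_s$, whose Young estimate involves the full H\"older seminorm of $X^\epsilon-\hat X^\epsilon$, not just its supremum. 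This is \emph{precisely} the Gronwall failure you flag later for the comparison of $\hat X^\epsilon$ with $\bar X$; introducing the auxiliary process does not sidestep it. Consequently this ``preliminary'' comparison already requires the $\lambda$-norm contraction on the event $\{\||B^H\||_\beta\le R\}$, and you would have to run that machinery twice rather than once.

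The paper avoids the duplication by dispensing with $\hat X^\epsilon$ altogether: it writes $X^\epsilon-\bar X$ directly and telescopes the drift through $b(s/\epsilon,X^\epsilon_{s(\delta)},\mathscr{L}_{X^\epsilon_{s(\delta)}})$ and $\bar b(X^\epsilon_{s(\delta)},\mathscr{L}_{X^\epsilon_{s(\delta)}})$, obtaining five pieces $\mathbf{A}_1,\dots,\mathbf{A}_5$ of which only two (the $\bar b$-Lipschitz term $\mathbf{A}_4$ and the stochastic integral $\mathbf{A}_5$) feed back into $\|X^\epsilon-\bar X\|_{\gamma,\lambda}$; both are absorbed by a single choice of $\lambda$ on $D=\{\||B^H\||_\beta\le R\}$. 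The freezing is at $X^\epsilon_{s(\delta)}$, for which a priori bounds are already available, so no separate SDE needs analysis. Note also that your endgame estimate $C(\delta^{2\gamma}+\varphi(\delta/\epsilon)^2)$ suppresses both the tail contribution $C\sqrt{R^{-1}\mathbb{E}[\||B^H\||_\beta]}$ from $\{\tau_R<T\}$ and the $R$-dependence of the contraction constant; the paper's order of limits is $R$ fixed first, then $\lambda=\lambda(R)$ large, then $\delta$ small, then $\epsilon\to0$, and only at the very end $R\to\infty$. The constants are not uniform in $R$, so this layering is not optional.
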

The proof of Theorem \ref{thm1} will be given in Section \ref{se-5}.
\begin{remark}
The averaging principle result (Theorem \ref{thm1}) is also applicable to the following system
\begin{align}\label{epsd}
\begin{split}
\mathrm d X_t=\epsilon b(t,X_t,\mathscr{L}_{X_t})\mathrm dt+\epsilon^H \sigma(X_t)\mathrm d B^H_t.
\end{split}
\end{align}

Let $t\mapsto{\frac{t}{\epsilon}}$, define $Y_{t}^{\epsilon}:=X_{t/\epsilon}$ and $B^{\epsilon, H}_{t}:=\epsilon^H B^H_{t/\epsilon}$ for all $t\in\mathbb{R}^{+}$ we rewrite (\ref{epsd}) as
\begin{align}
\begin{split}
&\mathrm{d}Y_{t}^{\epsilon}=b(t/\epsilon,Y_{t}^{\epsilon},\mathscr{L}_{Y_{t}^{\epsilon}})\mathrm{d}t+\sigma(Y_{t}^{\epsilon})\mathrm{d}B^{\epsilon, H}_{t}.
\end{split}
\end{align}

Then~we~can~consider~the~following system
\begin{align}
\begin{split}
\mathrm{d}\tilde{X}_{t}^{\epsilon}=b(t/\epsilon,\tilde{X}_{t}^{\epsilon},\mathscr{L}_{\tilde{X}_{t}^{\epsilon}})\mathrm{d}t+\sigma(\tilde{X}_{t}^{\epsilon})\mathrm{d}B^H_{t}.
\end{split}
\end{align}
\end{remark}

\section{The proof of main result}\label{se-5}
\subsection{Some a-prior estimate of the solution}
\begin{lemma}\label{xbounded}
	Suppose that {\rm(H1)-(H5)} hold. Then, for $ t \in [0, T] $, we have 
	$$  \|X^{\epsilon}\|_{\gamma}+\|\bar{X}\|_{\gamma} \leq C_{\alpha,\beta,\gamma,T,|x_0|}\big((1+\||B^H\||_{\beta})\vee(1+\||B^H\||_{\beta})^{\frac{1}{\gamma}}\big),\, \, {\rm a.s.} $$
\end{lemma}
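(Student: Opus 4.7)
The plan is to combine the Young integral bound from Remark~\ref{fbmito} with a local interval-splitting argument that absorbs the self-referential Hölder seminorm generated by the multiplicative diffusion. Starting from the mild formulation
\[
X_t^\epsilon = x_0 + \int_0^t b(s/\epsilon, X_s^\epsilon, \mathscr{L}_{X_s^\epsilon})\,\mathrm ds + \int_0^t \sigma(X_s^\epsilon)\,\mathrm dB_s^H,
\]
the drift is controlled by $M_b(t-s)$ using the boundedness part of (H1), while Remark~\ref{fbmito} gives $|\int_s^t \sigma(X_r^\epsilon)\, \mathrm dB_r^H| \le C_{\alpha,\beta,\gamma,T} \|\sigma(X^\epsilon)\|_{\gamma,s,t}\||B^H\||_\beta (t-s)^\beta$. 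Using (H2) to write $\|\sigma(X^\epsilon)\|_{\gamma,s,t} \le M_\sigma + L_\sigma \||X^\epsilon\||_{\gamma,s,t}$, I would then divide by $(t-s)^\gamma$ and take the supremum over $s,t\in[u,v]$ to arrive at
\[
\||X^\epsilon\||_{\gamma,u,v} \le M_b T^{1-\gamma} + C M_\sigma \||B^H\||_\beta (v-u)^{\beta-\gamma} + C L_\sigma \||B^H\||_\beta (v-u)^{\beta-\gamma} \||X^\epsilon\||_{\gamma,u,v},
\]
valid on any subinterval $[u,v]\subseteq[0,T]$.

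The crucial step is then to choose the random subinterval length $\Delta := (2CL_\sigma \||B^H\||_\beta)^{-1/(\beta-\gamma)}\wedge T$, which forces the coefficient of $\||X^\epsilon\||_{\gamma,u,v}$ on the right to be at most $1/2$; absorbing this term yields a deterministic bound $\||X^\epsilon\||_{\gamma,u,v}\le K$ on every subinterval of length at most $\Delta$, where $K$ depends only on the structural constants. To upgrade to a global estimate on $[0,T]$, I would partition $[0,T]$ into $N\sim T/\Delta$ pieces and telescope: this bounds $\|X^\epsilon\|_\infty$ by $|x_0|+NK\Delta^\gamma$, and then splitting the Hölder supremum into the regimes $t-s\le \Delta$ (handled by at most two-interval concatenation together with the elementary inequality $a^\gamma+b^\gamma\le 2(a+b)^\gamma$) and $t-s>\Delta$ (handled by $2\|X^\epsilon\|_\infty/\Delta^\gamma$) produces a bound of the form $\|X^\epsilon\|_\gamma \le C K (T/\Delta+1)$. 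Substituting the explicit form of $\Delta$ in terms of $\||B^H\||_\beta$ and bookkeeping the constants reproduces the stated $(1+\||B^H\||_\beta)\vee(1+\||B^H\||_\beta)^{1/\gamma}$ majorant, with the maximum simply handling the regimes of small and large $\||B^H\||_\beta$. The same argument applies verbatim to $\bar X$ because Remark~\ref{remark3.3} guarantees that $\bar b$ has the same boundedness and Lipschitz properties as $b$.

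The main obstacle, and the reason Gronwall's inequality cannot be applied directly (as emphasized in the introduction), is the self-referential character of the noise estimate: the Hölder seminorm of $\sigma(X^\epsilon)$ on $[s,t]$ depends on $\||X^\epsilon\||_{\gamma,s,t}$ itself. The interval-splitting device, which forces the local length to be small relative to the random variable $\||B^H\||_\beta$, is the standard mechanism in the fractional-SDE literature that circumvents precisely this difficulty and produces the required pathwise a priori estimate.
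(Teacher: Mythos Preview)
Your approach is essentially the same as the paper's: derive a local self-referential estimate for $\||X^\epsilon\||_{\gamma,s,t}$, choose a random step size $\Delta$ so that the coefficient in front of $\||X^\epsilon\||_{\gamma,s,t}$ becomes $\le 1/2$, absorb, partition $[0,T]$ into $\sim T/\Delta$ pieces, telescope to control $\|X^\epsilon\|_\infty$, and then split the global H\"older seminorm into the regimes $t-s\le\Delta$ and $t-s>\Delta$. The paper does exactly this (citing Hu--Nualart and Friz--Hairer for the local step), and handles $\bar X$ identically.

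There is one bookkeeping slip. With your local inequality the self-referential factor is $CL_\sigma\||B^H\||_\beta(v-u)^{\beta-\gamma}$, so your $\Delta$ scales like $\||B^H\||_\beta^{-1/(\beta-\gamma)}$. Carrying this through the telescoping and the $t-s>\Delta$ regime gives a final exponent of order $1/(\beta-\gamma)$ (or $(1-\gamma)/(\beta-\gamma)$ for the sup-norm), not $1/\gamma$. The paper instead records the local inequality in the form
\[
\||X^\epsilon\||_{\gamma,s,t}\le C_{\alpha,\beta,\gamma,T}(1+\||B^H\||_\beta)\big(1+\||X^\epsilon\||_{\gamma,s,t}(t-s)^\gamma\big),
\]
which leads to $\Delta=(2C(1+\||B^H\||_\beta))^{-1/\gamma}$ and hence to the stated exponent $1/\gamma$. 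This discrepancy is harmless for the rest of the paper (Lemma~\ref{xbounded} is only ever used to produce a polynomial bound in $\||B^H\||_\beta$, either on the event $\{\||B^H\||_\beta\le R\}$ or after taking expectations via Remark~\ref{fbmito1}), but your sentence ``bookkeeping the constants reproduces the stated $(1+\||B^H\||_\beta)^{1/\gamma}$ majorant'' is not literally correct with your choice of $\Delta$.
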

\begin{proof}
Like the proof of Theorem 2.2 in \cite{hu2007differential} and Exercise 4.5 in \cite{friz2020course}, for any $ 0\leq s < t\leq T $, we have 
	\begin{eqnarray*}
	\||X^{\epsilon}\||_{\gamma,s,t}
	\leq
	C_{\alpha,\beta,\gamma,T}(1+\||B^H\||_{\beta})\big(1+\||X^{\epsilon}\||_{\gamma,s,t}(t-s)^{\gamma}\big).
	\end{eqnarray*}
	
	Suppose that $ \Delta $ satisfies $
	\Delta 
	=
	(2 C_{\alpha,\beta,\gamma,T}(1+\||B^H\||_{\beta})^{-\frac{1}{\gamma}}.$
	Then, for all $ s $ and $ t $ such that $ t - s \leq \Delta $ we have
	\begin{eqnarray}\label{xbeta2}
	\||X^{\epsilon}\||_{\gamma,s,t}
	\leq
	2 C_{\alpha,\beta,\gamma,T}(1+\||B^H\||_{\beta}).
	\end{eqnarray}
	
	Therefore we can obtain
	\begin{eqnarray}\label{xinfty1}
	\|X^{\epsilon}\|_{\infty,s,t}
	\leq
	|X^{\epsilon}_s|+\||X^{\epsilon}\||_{\gamma,s,t}(t-s)^{\gamma}
	\leq
	|X^{\epsilon}_s|+2 C_{\alpha,\beta,\gamma,T}(1+\||B^H\||_{\beta})\Delta^{\gamma}.
	\end{eqnarray}
	
If $ \Delta\geq T $, from (\ref{xbeta2}) and (\ref{xinfty1}), we obtain the estimate
	\begin{eqnarray}\label{xdt1}
	\|X^{\epsilon}\|_{\infty,s,t}
	\leq
	|x_0|+2 C_{\alpha,\beta,\gamma,T}(1+\||B^H\||_{\beta})T^{\gamma} \, {\rm and}\,\,
	\||X^{\epsilon}\||_{\gamma}
	\leq
	2 C_{\alpha,\beta,\gamma,T}(1+\||B^H\||_{\beta}).
	\end{eqnarray}
	
	While if $ \Delta\leq T $, then from (\ref{xinfty1}) we get
	\begin{eqnarray}\label{xinfty2}
	\|X^{\epsilon}\|_{\infty,s,t}
	\leq
	|X^{\epsilon}_s|+2 C_{\alpha,\beta,\gamma,T}(1+\||B^H\||_{\beta})(2 C_{\alpha,\beta,\gamma,T}(1+\||B^H\||_{\beta}))^{-{\frac{1}{\gamma}}\cdot\gamma}
	\leq
	|X^{\epsilon}_s|+1.
	\end{eqnarray}
	
Divide the interval $ [0, T] $ into $ n = [\frac{T }{\Delta}] + 1 $ subintervals, and use the estimate (\ref{xinfty2}) in every interval, we obtain
	\begin{eqnarray}\label{xsup2}
	\|X^{\epsilon}\|_{\infty}
	\leq
	|x_0|+n
	\leq
	|x_0|+T\Delta^{-1}+1
	\leq
	|x_0|+2T\big(2 C_{\alpha,\beta,\gamma,T}(1+\||B^H\||_{\beta})\big)^{\frac{1}{\gamma}}
	\leq
	|x_0|+2^{\frac{1+\gamma}{\gamma}}T C_{\alpha,\beta,\gamma,T}^{\frac{1}{\gamma}}(1+\||B^H\||_{\beta})^{\frac{1}{\gamma}}
	\end{eqnarray}
	and from (\ref{xbeta2}), we know that when $ t-s \leq \Delta $, then $ \||X^{\epsilon}\||_{\gamma,s,t}\leq 2 C_{\alpha,\beta,\gamma,T}(1+\||B^H\||_{\beta})$, when $ t-s \geq \Delta $, define $ t_{i}=(s+i\Delta)\wedge t $, for $ i=0,1,\dots,N $, noting that $ t_{N}=t $ for $ N\geq |t-s|/\Delta $ and also $ t_{i+1}-t_{i}\leq \Delta $ for all $ i $, then 
	\begin{eqnarray*}
	|X^{\epsilon}_t-X^{\epsilon}_s|
	\leq
	\sum_{0\leq i<|t-s|/\Delta}|X^{\epsilon}_{t_{i+1}}-X^{\epsilon}_{t_{i}}|
	\leq
	\big(|t-s|/\Delta+1\big)\Delta^{\gamma}
	=
	\Delta^{\gamma-1}(\Delta+|t-s|)
	\leq
	2\Delta^{\gamma-1}|t-s|.
	\end{eqnarray*}
	
	So we have
	\begin{eqnarray}\label{xbeta3}
	\||X^{\epsilon}\||_{\gamma,s,t}
	&\leq&
	2 C_{\alpha,\beta,\gamma,T}(1+\||B^H\||_{\beta})(1\vee	2\Delta^{\gamma-1})
	=
	\Big\{2 C_{\alpha,\beta,\gamma,T}(1+\||B^H\||_{\beta})\vee	2\Delta^{-1}\Big\}\cr
	&=&
	\Big\{2 C_{\alpha,\beta,\gamma,T}(1+\||B^H\||_{\beta})\vee	2\big(2 C_{\alpha,\beta,\gamma,T}(1+\||B^H\||_{\beta})\big)^{\frac{1}{\gamma}}\Big\}.
	\end{eqnarray}
	
	Thus, from (\ref{xdt1}), (\ref{xsup2}) and (\ref{xbeta3}), we get the desire estimate.
	
	Using similar techniques, we can prove
	$$ \|\bar{X}\|_{\gamma} \leq C_{\alpha,\beta,\gamma,T,|x_0|}\big((1+\||B^H\||_{\beta})\vee(1+\||B^H\||_{\beta})^{\frac{1}{\gamma}}\big),\, \, {\rm a.s.} $$
Here we omit the proof.
\end{proof}
\begin{lemma}\label{x-xh}
	Suppose that {\rm(H1)}-{\rm(H5)} hold. Then, if $ 0\leq t\leq t+h\leq T $, and $ h\in(0,1) $, we have
	$$ |X^{\epsilon}_{t+h}-X^{\epsilon}_{t}|\leq C_{\alpha,\beta,\gamma,T}(1+\||B^H\||_{\beta})(1+\|X^{\epsilon}\|_{\gamma})h^{\beta},\, \, {\rm a.s.} $$
\end{lemma}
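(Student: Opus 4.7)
The plan is to start from the integral form of the SDE
\begin{equation*}
X^{\epsilon}_{t+h}-X^{\epsilon}_{t}=\int_{t}^{t+h}b(s/\epsilon,X^{\epsilon}_{s},\mathscr{L}_{X^{\epsilon}_{s}})\,\mathrm{d}s+\int_{t}^{t+h}\sigma(X^{\epsilon}_{s})\,\mathrm{d}B^{H}_{s},
\end{equation*}
and estimate the drift and the fractional stochastic integral separately. For the drift, the uniform bound $|b|\leq M_{b}$ from (H1) immediately gives $|\int_{t}^{t+h}b\,\mathrm{d}s|\leq M_{b}h$, and since $h\in(0,1)$ and $\beta\in(0,1)$ we have $h\leq h^{\beta}$, so this term contributes at most $M_{b}h^{\beta}$.

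For the fractional integral I would apply Remark \ref{fbmito} on the subinterval $[t,t+h]$, which yields
\begin{equation*}
\bigg|\int_{t}^{t+h}\sigma(X^{\epsilon}_{s})\,\mathrm{d}B^{H}_{s}\bigg|\leq C_{\alpha,\beta,\gamma,T}\,\|\sigma(X^{\epsilon})\|_{\gamma,t,t+h}\,\||B^{H}\||_{\beta}\,h^{\beta},\quad\text{a.s.}
\end{equation*}
To convert the composite H\"older norm into a norm on $X^{\epsilon}$, I would use (H2): boundedness of $\sigma$ gives $\|\sigma(X^{\epsilon})\|_{\infty,t,t+h}\leq M_{\sigma}$, and Lipschitz continuity yields
\begin{equation*}
\||\sigma(X^{\epsilon})\||_{\gamma,t,t+h}\leq L_{\sigma}\||X^{\epsilon}\||_{\gamma,t,t+h}\leq L_{\sigma}\|X^{\epsilon}\|_{\gamma}.
\end{equation*}
Combining these gives $\|\sigma(X^{\epsilon})\|_{\gamma,t,t+h}\leq C(1+\|X^{\epsilon}\|_{\gamma})$.

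Putting the two estimates together and absorbing constants into $C_{\alpha,\beta,\gamma,T}$ yields the desired inequality. The argument is essentially routine once Remark \ref{fbmito} is in hand; the only mild subtlety is the standard composition bound that turns a Lipschitz function of $X^{\epsilon}$ into a $\gamma$-H\"older function with constant controlled by $\|X^{\epsilon}\|_{\gamma}$, and observing that the linear drift bound $M_{b}h$ is dominated by $h^{\beta}$ on $(0,1)$ so that both contributions share the common factor $h^{\beta}$.
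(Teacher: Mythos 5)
Your proposal is correct and matches the paper's own proof essentially step for step: the same decomposition into drift and fractional integral, the same use of the a.s.\ bound of Remark \ref{fbmito} on $[t,t+h]$, the same drift estimate $M_b h \leq M_b h^{\beta}$ via (H1), and the same composition bound $\|\sigma(X^{\epsilon})\|_{\gamma,t,t+h}\leq C(1+\|X^{\epsilon}\|_{\gamma})$ from the boundedness and Lipschitz continuity in (H2). Nothing further is needed.
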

\begin{proof}
	From (\ref{mvsde1}), by (H1)-(H5), H\"{o}lder inequality and Remark \ref{fbmito}, we have
	\begin{eqnarray*}
		|X^{\epsilon}_{t+h}-X^{\epsilon}_{t}|
		&\leq&
		\bigg|\int_{t}^{t+h}b({s}/{\epsilon},X^{\epsilon}_{s},\mathscr{L}_{X^{\epsilon}_{s}})\mathrm{d}s\bigg|
		+\bigg|\int_{t}^{t+h}\sigma(X^{\epsilon}_{s})\mathrm{d}B^{H}_{s}\bigg|\cr
		&\leq&
		\int_{t}^{t+h}|b({s}/{\epsilon},X^{\epsilon}_{s},\mathscr{L}_{X^{\epsilon}_{s}})|\mathrm{d}s
		+
		C_{\alpha,\beta,\gamma,T}\||B^H\||_{\beta}\Big(\sup_{t\leq r\leq t+h}|\sigma(X^{\epsilon}_{r})|+\sup_{t\leq u< r\leq t+h}\frac{|\sigma(X^{\epsilon}_{r})-\sigma(X^{\epsilon}_{u})|}{(r-u)^{\gamma}}\Big)h^{\beta}\cr
		&\leq& C_{\alpha,\beta,\gamma,T}(1+\||B^H\||_{\beta})\big(1+\|X^{\epsilon}\|_{\gamma}\big)h^{\beta}.
	\end{eqnarray*}
This completes the proof.
\end{proof}

\subsection{The proof of Theorem \ref{thm1}}
For each $R>1$, we define the following stopping times $\tau_R$ such that
\begin{eqnarray}\label{stoptime}
\tau_R :=\inf \{t\geq 0:\||B^{H}\||_{\beta,0,t}> R \} \wedge T.
\end{eqnarray}

	Firstly, we have
	\begin{eqnarray}\label{x-xbarstop}
	\mathbb{E}\big[\|X^{\epsilon}-\bar{X}\|_{\gamma}^2\big] 
	\leq
	\mathbb{E}\big[\|X^{\epsilon}-\bar{X}\|_{\gamma}^2\mathbf{1}_{\{\tau_R \geq T\}}\big]+\mathbb{E}\big[\|X^{\epsilon}-\bar{X}\|_{\gamma}^2\mathbf{1}_{\{\tau_R < T\}}\big]
	\end{eqnarray}
where $\mathbf{1}_{\cdot}$ is an indicator function. For the first supremum in the right-hand side of inequality (\ref{x-xbarstop}), denote $D :=\{\||B^{H}\||_{\beta} \leq R\} $. Now for $\lambda \geq 1$ a equivalent norm of $ C^\gamma([0,T],\mathbb{R}) $ with $\gamma\in (0,1)$ is defined by 
	$$\|f\|_{\gamma,\lambda,0,T} :=\|f\|_{\infty,\lambda,0,T}+\||f\||_{\gamma,\lambda,0,T}:=\sup_{t\in [0,T]}e^{-\lambda t/2}|f(t)|+\sup_{0 \leq s < t \leq T}e^{-\lambda t/2}\frac{|f(t)-f(s)|}{(t-s)^{\gamma}}.$$
  For simplify, let $
 \|f\|_{\gamma,\lambda}:=\|f\|_{\gamma,\lambda,0,T},\|f\|_{\infty,\lambda}
:=\|f\|_{\infty,\lambda,0,T}$ and $\||f\||_{\gamma,\lambda}:=\||f\||_{\gamma,\lambda,0,T}$.

In what follows we fix $ 0 < \alpha<\gamma<\beta $, $ \frac{1}{2} <\beta <H  $, $ \gamma +\beta >1  $. We will show that for every $ \rho_0 > 0 $ there exists an $ \epsilon_0 > 0 $ so that for $ \epsilon < \epsilon_0 $, $ \lambda > \lambda_0 $ we have
    \begin{equation}\label{Xepsdelta}
    	\|X^\epsilon - \bar{X}\|_{\gamma,\lambda}\leq \rho_0.
    \end{equation}

    Note that the norm here is equivalent to the norm in the conclusion. Here $ \delta\in(0, 1) $ is a parameter depending on $ \rho_0 $. To estimate all the terms in the following inequality we have to consider 3 cases. For the first case the right hand side will be absorbed by the left hand side of the inequality when $ \lambda $ is sufficiently large. The second case includes terms providing estimates like $ C\delta^{2-2\gamma}$ where $ C $ is a priori determined by $ \alpha,\beta,\gamma,T,|x_{0}| $ but independent of $ \rho_0,\lambda,\delta,\epsilon $, then we choose fixed $ \delta $ so that $ C\delta^{2-2\gamma} < \zeta \rho_0 $, $ \zeta > 0 $ sufficiently small. The third case contains terms providing an estimate $ \sqrt{R^{-1}\mathbb{E}[\||B^{H}\||_{\beta}]} $, which can be made arbitrarily small when $R$ is sufficiently large. 

 Let
$
		\mathbf{A}:=\mathbb{E}\big[\|X^{\epsilon}-\bar{X}\|_{\gamma,\lambda}^2\mathbf{1}_{D}\big],$ and divide $ [0, T ] $ into intervals depending of size $ \delta $, where $ \delta\in (0,1) $ is a fixed positive number. For $t \in [k\delta,\min\{(k+1)\delta,T \}]$ and $s(\delta)=\lfloor\frac{s}{\delta}\rfloor\delta$, where $\lfloor\frac{s}{\delta}\rfloor$ is the integer part of $\frac{s}{\delta}$. From (\ref{mvsde1}) and (\ref{avermvsde}), we have
	\begin{eqnarray*}
		\mathbf{A}
		&\leq&
		5 \mathbb{E}\bigg[\bigg\|\int_{0}^{\cdot} ( b({s}/{\epsilon},X_{s}^{\epsilon},\mathscr{L}_{X_{s}^{\epsilon}})-b({s}/{\epsilon},X_{s(\delta)}^{\epsilon},\mathscr{L}_{X_{s(\delta)}^{\epsilon}}) ) \mathrm{d} s\bigg\|_{\gamma,\lambda}^2\mathbf{1}_{D} \bigg]\cr
		& &+
		5\mathbb{E}\bigg[\bigg\|\int_{0}^{\cdot} ( b({s}/{\epsilon},X_{s(\delta)}^{\epsilon},\mathscr{L}_{X_{s(\delta)}^{\epsilon}})-\bar{b}(X_{s(\delta)}^{\epsilon},\mathscr{L}_{X_{s(\delta)}^{\epsilon}}) ) \mathrm{d} s\bigg\|_{\gamma,\lambda}^2\mathbf{1}_{D} \bigg]\cr
		& &+
		5 \mathbb{E}\bigg[\bigg\|\int_{0}^{\cdot} ( \bar{b}(X_{s(\delta)}^{\epsilon},\mathscr{L}_{X_{s(\delta)}^{\epsilon}})-\bar{b}(X_{s}^{\epsilon},\mathscr{L}_{X_{s}^{\epsilon}}) ) \mathrm{d} s\bigg\|_{\gamma,\lambda}^2\mathbf{1}_{D} \bigg]\cr
		& &+
		5 \mathbb{E}\bigg[\bigg\|\int_{0}^{\cdot} ( \bar{b}(X_{s}^{\epsilon},\mathscr{L}_{X_{s}^{\epsilon}})-\bar{b}(\bar{X}_{s},\mathscr{L}_{\bar{X}_{s}}) ) \mathrm{d} s\bigg\|_{\gamma,\lambda}^2\mathbf{1}_{D} \bigg]\cr
		& &+
		5 \mathbb{E}\bigg[\bigg \|\int_{0}^{\cdot}( \sigma(X^\epsilon_{s})-\sigma(\bar{X}_{s}) )\mathrm{d} B^{H}_{s}\bigg\|_{\gamma,\lambda}^2\mathbf{1}_{D} \bigg]
		=:
		\sum_{i= 1}^{5}\mathbf{A}_i.
	\end{eqnarray*}

	By H\"{o}lder’s inequality, it is easy to obtain
	\begin{eqnarray}\label{b1^2}
		\bigg\|\int_{0}^{\cdot}f(s)\mathrm{d}s\bigg\|_{\gamma,\lambda}^2
		\leq
		C_{\gamma,T}\sup_{t\in [0,T]}e^{-\lambda t}\int_{0}^t\frac{|f(s)|^2}{(t-s)^\gamma}\mathrm{d}s.
	\end{eqnarray}
 
    By (H2), (\ref{b1^2}) and Lemma \ref{x-xh}, we obtain
	\begin{eqnarray}\label{a-13}
	\mathbf{A}_1+\mathbf{A}_3
	&\leq& 
	C_{\gamma,T} \mathbb{E}\bigg[\sup_{t\in [0,T]}e^{-\lambda t} \int_{0}^{t} | b({s}/{\epsilon},X_{s}^{\epsilon},\mathscr{L}_{X_{s}^{\epsilon}})-b({s}/{\epsilon},X_{s(\delta)}^{\epsilon},\mathscr{L}_{X_{s(\delta)}^{\epsilon}}) |^2(t-s)^{-\gamma}\mathrm{d} s \mathbf{1}_{D}\bigg]\cr
	& &+
	C_{\gamma,T} \mathbb{E}\bigg[\sup_{t\in [0,T]}e^{-\lambda t} \int_{0}^{t} |\bar{b}(X_{s(\delta)}^{\epsilon},\mathscr{L}_{X_{s(\delta)}^{\epsilon}})-\bar{b}(X_{s}^{\epsilon},\mathscr{L}_{X_{s}^{\epsilon}})|^2(t-s)^{-\gamma}\mathrm{d} s \mathbf{1}_{D}\bigg]\cr
	&\leq& 
	C_{\gamma,T} \mathbb{E}\bigg[\sup_{t\in [0,T]}e^{-\lambda t} \int_{0}^{t}(|X_{s(\delta)}^{\epsilon}-X^\epsilon_s|^{2}+\mathbb{E}[|X_{s(\delta)}^{\epsilon}-X^\epsilon_s|^{2}])(t-s)^{-\gamma}\mathrm{d} s \mathbf{1}_{D}\bigg]\cr
	&\leq& 
	C_{\alpha,\beta,\gamma,T} \mathbb{E}\bigg[\sup_{t\in [0,T]}e^{-\lambda t} \int_{0}^{t}((1+\||B^H\||_{\beta}^2)(1+\|X^{\epsilon}\|_{\gamma}^2)\delta^{2\beta}+\mathbb{E}[(1+\||B^H\||_{\beta}^2)(1+\|X^{\epsilon}\|_{\gamma}^2)\delta^{2\beta}])(t-s)^{-\gamma}\mathrm{d} s \mathbf{1}_{D}\bigg]\cr
	&\leq&
	C_{\alpha,\beta,\gamma,T,|x_{0}|}\delta^{2\beta}.
	\end{eqnarray}
	
	By elementary inequality, we have
	\begin{eqnarray*}
		\mathbf{A}_2
		&\leq &
		C \mathbb{E}\bigg[ \sup_{t\in [0, T]}e^{-\lambda t}\bigg|\int_{0}^{t}( b({s}/{\epsilon}, X_{s(\delta)}^{\epsilon},\mathscr{L}_{X_{s(\delta)}^{\epsilon}})-\bar{b}(X_{s(\delta)}^{\epsilon},\mathscr{L}_{X_{s(\delta)}^{\epsilon}}) ) \mathrm{d}s \bigg|^{2}\mathbf{1}_{D}\bigg]\cr
		& &+ 
		C \mathbb{E}\bigg[ \sup_{0\leq s<t\leq T}e^{-\lambda t}\frac{\big|\int_{s}^{t}( b({r}/{\epsilon}, X_{r(\delta)}^{\epsilon},\mathscr{L}_{X_{r(\delta)}^{\epsilon}})-\bar{b}(X_{r(\delta)}^{\epsilon},\mathscr{L}_{X_{r(\delta)}^{\epsilon}}) ) \mathrm{d} r\big|^2}{(t-s)^{2\gamma}}\mathbf{1}_{D}\bigg]\cr
		&=:& 
	\mathbf{A}_{21}+\mathbf{A}_{22}.
	\end{eqnarray*}	
	
	For $\mathbf{A}_{21}$, by (H1)-(H5), H\"{o}lder inequality and Lemma \ref{xbounded}, we have
	\begin{eqnarray*}
		\mathbf{A}_{21}
		&\leq & 
		C \mathbb{E}\bigg[ \sup_{t\in [0, T]}\bigg|\sum_{k=0}^{\lfloor \frac{t}{\delta}\rfloor-1} \int_{k\delta}^{(k+1)\delta}( b({s}/{\epsilon}, X_{k\delta}^{\epsilon},\mathscr{L}_{X_{k\delta}^{\epsilon}})-\bar{b}(X_{k\delta}^{\epsilon},\mathscr{L}_{X_{k\delta}^{\epsilon}}) ) \mathrm{d}s\bigg|^{2}\bigg]\cr
		& &+
		C \mathbb{E}\bigg[ \sup_{t\in [0, T]}\bigg|\int_{\lfloor \frac{t}{\delta}\rfloor \delta}^{t}( b({s}/{\epsilon}, X_{s(\delta)}^{\epsilon},\mathscr{L}_{X_{s(\delta)}^{\epsilon}})-\bar{b}(X_{s(\delta)}^{\epsilon},\mathscr{L}_{X_{s(\delta)}^{\epsilon}})) \mathrm{d}s \bigg|^{2}\bigg]\cr
		&\leq&
		C \mathbb{E}\bigg[\sup_{t\in [0, T]}\lfloor \frac{t}{\delta}\rfloor \sum_{k=0}^{\lfloor \frac{t}{\delta}\rfloor-1} \bigg|\int_{k\delta}^{(k+1)\delta}( b({s}/{\epsilon}, X_{k\delta}^{\epsilon},\mathscr{L}_{X_{k\delta}^{\epsilon}})-\bar{b}(X_{k\delta}^{\epsilon},\mathscr{L}_{X_{k\delta}^{\epsilon}})) \mathrm{d}s\bigg|^{2}\bigg] \cr
		& &+
		C\mathbb{E}\bigg[\sup_{t\in [0, T]}\Big(t-\lfloor \frac{t}{\delta}\rfloor \delta\Big)\int_{\lfloor \frac{t}{\delta}\rfloor \delta}^{t}(|b({s}/{\epsilon}, X_{s(\delta)}^{\epsilon},\mathscr{L}_{X_{s(\delta)}^{\epsilon}})|^2+|\bar{b}(X_{s(\delta)}^{\epsilon},\mathscr{L}_{X_{s(\delta)}^{\epsilon}})|^2) \mathrm{d}s\Big]\cr
		&\leq & C_{T}\delta^2+
		\frac{C_T}{\delta^2}  \max_{0\leq k \leq \lfloor \frac{T}{\delta}\rfloor-1} \mathbb{E}\bigg[\bigg|\epsilon\int_{\frac{k\delta}{\epsilon}}^{\frac{(k+1)\delta}{\epsilon}}( b(s, X_{k\delta}^{\epsilon},\mathscr{L}_{X_{k\delta}^{\epsilon}})-\bar{b}(X_{k\delta}^{\epsilon},\mathscr{L}_{X_{k\delta}^{\epsilon}}) ) \mathrm{d}s\bigg|^{2}\bigg]
		\cr
		&\leq & 
	C_{T}\delta^2+
		\frac{C_T}{\delta^2}  \max_{0\leq k \leq \lfloor \frac{T}{\delta}\rfloor-1} \mathbb{E}\bigg[\bigg|\frac{T\epsilon}{\delta(k+1)}\int_{0}^{\frac{(k+1)\delta}{\epsilon}}( b(s, X_{k\delta}^{\epsilon},\mathscr{L}_{X_{k\delta}^{\epsilon}})-\bar{b}(X_{k\delta}^{\epsilon},\mathscr{L}_{X_{k\delta}^{\epsilon}}) ) \mathrm{d}s\bigg|^{2}\bigg]\cr
		& &+
		\frac{C_T}{\delta^2}  \max_{0\leq k \leq \lfloor \frac{T}{\delta}\rfloor-1} \mathbb{E}\bigg[\bigg|\frac{T\epsilon}{\delta k}\int_{0}^{\frac{k\delta}{\epsilon}}( b(s, X_{k\delta}^{\epsilon},\mathscr{L}_{X_{k\delta}^{\epsilon}})-\bar{b}(X_{k\delta}^{\epsilon},\mathscr{L}_{X_{k\delta}^{\epsilon}}) ) \mathrm{d}s\bigg|^{2}\bigg].
	\end{eqnarray*}	

We have for $ \epsilon \rightarrow 0 $, $ \frac{\delta(k+1)}{\epsilon} \rightarrow \infty$ for any $ k, 1 \leq k \leq \lfloor \frac{T}{\delta}\rfloor-1 $. In addition we take the maximum over finitely many elements determined by the fixed number $ \delta $ given and $ T $. Following (H5), we have for every element under the maximum
	\begin{align}\label{h33}
\begin{split}
	\max_{0\leq k \leq \lfloor \frac{T}{\delta}\rfloor-1}\bigg|\frac{\epsilon}{\delta(k+1)}&\int_{0}^{\frac{(k+1)\delta}{\epsilon}}( b(s, X_{k\delta}^{\epsilon},\mathscr{L}_{X_{k\delta}^{\epsilon}})-\bar{b}(X_{k\delta}^{\epsilon},\mathscr{L}_{X_{k\delta}^{\epsilon}}) ) \mathrm{d}s\bigg|\\
\leq &	\max_{0\leq k \leq \lfloor \frac{T}{\delta}\rfloor-1} \varphi\bigg(\frac{(k+1)\delta}{\epsilon}\bigg)\big(1+|X_{k\delta}^{\epsilon}|^2+\mathbb{W}_2(\mathscr{L}_{X_{k\delta}^{\epsilon}},\delta_0)\big)\leq C_{\epsilon}
\end{split}
	\end{align}
	where $ C_{\epsilon}\rightarrow0 $, as $ \epsilon\rightarrow0 $. Thus, we have for $ \epsilon $ sufficiently small and the $ \delta $ given
	\begin{equation}\label{a-21}
	\mathbf{A}_{21} \leq C_{\alpha,\beta,\gamma,T,|x_0|}\delta^2,
	\end{equation}
	
	For $\mathbf{A}_{22}$, by (H1)-(H5), H\"{o}lder inequality and Lemma \ref{xbounded} again, we have
	\begin{eqnarray*}
		\mathbf{A}_{22} 
		&\leq &
	C\mathbb{E}\bigg[ \bigg(\sup_{0\leq s<t\leq T}\frac{\big|\int_{s}^{t}( b({r}/{\epsilon}, X_{r(\delta)}^{\epsilon},\mathscr{L}_{X_{r(\delta)}^{\epsilon}})-\bar{b}(X_{r(\delta)}^{\epsilon},\mathscr{L}_{X_{r(\delta)}^{\epsilon}}) ) \mathrm{d} r\big|}{(t-s)^{\gamma}}\bigg)^2 \mathbf{1}_{\ell}\bigg]\cr
		& &+
	C\mathbb{E}\bigg[ \bigg(\sup_{0\leq s<t\leq T}\frac{\big|\int_{s}^{t}( b({r}/{\epsilon}, X_{r(\delta)}^{\epsilon},\mathscr{L}_{X_{r(\delta)}^{\epsilon}})-\bar{b}(X_{r(\delta)}^{\epsilon},\mathscr{L}_{X_{r(\delta)}^{\epsilon}}) ) \mathrm{d} r\big|}{(t-s)^{\gamma}}\bigg)^2 \mathbf{1}_{\ell^c}\bigg]\cr
		&=:&
		\mathbf{A}_{221}+\mathbf{A}_{222}
	\end{eqnarray*}	
	where $\ell:=\{t < (\lfloor \frac{s}{\delta}\rfloor+2)\delta\}$ and $\ell^c:=\{t \geq (\lfloor \frac{s}{\delta}\rfloor+2)\delta\}$. For $\mathbf{A}_{222}$, by (H2), (H3) and the fact  $\ell=\{t < (\lfloor \frac{s}{\delta}\rfloor+2)\delta\}  $ implies that $t -s< \lfloor \frac{s}{\delta}\rfloor \delta -s + 2\delta \leq  2\delta $,  so we have
	\begin{eqnarray*}
		\mathbf{A}_{221}
		\leq
		C\mathbb{E}\Big[\sup_{0\leq s<t\leq T}(t-s)^{2-2\gamma}\mathbf{1}_{\ell}\Big]
		\leq
		C\delta^{2-2\gamma}.
	\end{eqnarray*}
	
	By (H1)-(H5) and the fact that 
	$\lfloor \lambda_1 \rfloor -\lfloor \lambda_2 \rfloor \leq \lambda_1-\lambda_2 +1, $ for $\lambda_1 \geq \lambda_2 \geq 0$, we have
	\begin{eqnarray*}
		\mathbf{A}_{222}
		&\leq&
		C \mathbb{E}\bigg[ \sup_{0\leq s<t\leq T} \frac{\big|\int_{s}^{(\lfloor \frac{s}{\delta}\rfloor+1)\delta}( b({r}/{\epsilon}, X_{r(\delta)}^{\epsilon},\mathscr{L}_{ X_{r(\delta)}^{\epsilon}})-\bar{b}( X_{r(\delta)}^{\epsilon},\mathscr{L}_{X_{r(\delta)}^{\epsilon}}) ) \mathrm{d}r\big|^{2}}{(t-s)^{2\gamma}} \mathbf{1}_{\ell^c}\bigg]\cr
		& &+
		C \mathbb{E}\bigg[ \sup_{0\leq s<t\leq T} \frac{\big|\int_{\lfloor \frac{t}{\delta}\rfloor\delta}^{t}( b({r}/{\epsilon}, X_{r(\delta)}^{\epsilon},\mathscr{L}_{ X_{r(\delta)}^{\epsilon}})-\bar{b}( X_{r(\delta)}^{\epsilon},\mathscr{L}_{X_{r(\delta)}^{\epsilon}}) ) \mathrm{d}r\big|^{2}}{(t-s)^{2\gamma}} \mathbf{1}_{\ell^c}\bigg]\cr
		& &+ 
		C \mathbb{E}\bigg[ \sup_{0\leq s<t\leq T} \frac{\big|\sum_{k=\lfloor \frac{s}{\delta}\rfloor +1}^{\lfloor \frac{t}{\delta}\rfloor-1} \int_{k\delta}^{(k+1)\delta }( b({r}/{\epsilon}, X_{k\delta}^{\epsilon},\mathscr{L}_{ X_{k\delta}^{\epsilon}})-\bar{b}(X_{k\delta}^{\epsilon},\mathscr{L}_{X_{k\delta}^{\epsilon}}) ) \mathrm{d}r\big|^{2}}{(t-s)^{2\gamma}}\mathbf{1}_{\ell^c}\bigg]\cr
		&\leq& 
	C\mathbb{E}\bigg[ \sup_{0\leq s<t\leq T} \frac{\big|\int_{s}^{(\lfloor \frac{s}{\delta}\rfloor+1)\delta}( b({r}/{\epsilon}, X_{r(\delta)}^{\epsilon},\mathscr{L}_{ X_{r(\delta)}^{\epsilon}})-\bar{b}( X_{r(\delta)}^{\epsilon},\mathscr{L}_{X_{r(\delta)}^{\epsilon}}) ) \mathrm{d}r\big|^{2}}{(t-s)^{2\gamma}} \mathbf{1}_{\ell^c}\bigg]\cr
		& &+
		C\mathbb{E}\bigg[ \sup_{0\leq s<t\leq T} \frac{\big|\int_{\lfloor \frac{t}{\delta}\rfloor\delta}^{t}( b({r}/{\epsilon}, X_{r(\delta)}^{\epsilon},\mathscr{L}_{ X_{r(\delta)}^{\epsilon}})-\bar{b}( X_{r(\delta)}^{\epsilon},\mathscr{L}_{X_{r(\delta)}^{\epsilon}}) ) \mathrm{d}r\big|^{2}}{(t-s)^{2\gamma}} \mathbf{1}_{\ell^c}\bigg]\cr
		& &+ 
		C\mathbb{E}\bigg[ \sup_{0\leq s<t\leq T} \frac{(\lfloor \frac{t}{\delta}\rfloor-\lfloor \frac{s}{\delta}\rfloor-1)}{(t-s)^{2\gamma}}\sum_{k=\lfloor \frac{s}{\delta}\rfloor +1}^{\lfloor \frac{t}{\delta}\rfloor-1} \bigg|\int_{k\delta}^{(k+1)\delta }( b({r}/{\epsilon}, X_{k\delta}^{\epsilon},\mathscr{L}_{ X_{k\delta}^{\epsilon}})-\bar{b}(X_{k\delta}^{\epsilon},\mathscr{L}_{X_{k\delta}^{\epsilon}})) \mathrm{d}r\bigg|^{2}\mathbf{1}_{\ell^c}\bigg]\cr
		&\leq&
		C\delta^{2-2\gamma}
		+ \frac{C_{\gamma,T}}{\delta^2}   \max_{0\leq k \leq\lfloor \frac{T}{\delta}\rfloor-1}\mathbb{E}\bigg[\bigg|\int_{k\delta}^{(k+1)\delta }( b({r}/{\epsilon}, X_{k\delta}^{\epsilon},\mathscr{L}_{ X_{k\delta}^{\epsilon}})-\bar{b}(X_{k\delta}^{\epsilon},\mathscr{L}_{X_{k\delta}^{\epsilon}}) ) \mathrm{d}r\bigg|^{2} \mathbf{1}_{\ell^c}\bigg].
	\end{eqnarray*}
	
	Using (H5) again, the remaining term on the right hand side can be estimated similar to $ \mathbf{A}_{21} $, see (\ref{h33}). We have
	\begin{equation}\label{a-22}
	\mathbf{A}_{22} \leq C_{\alpha,\beta,\gamma,T,|x_0|}\delta^{2-2\gamma}.
	\end{equation}
		
		For $ \mathbf{A}_4 $, by (\ref{b1^2}), we have
		\begin{eqnarray}\label{a-4}
		\mathbf{A}_4 
		&\leq& 
		C_{\gamma,T}\mathbb{E}\bigg[\sup_{t\in [0,T]} e^{-\lambda t}\int_{0}^{t} |\bar{b}(X_{s}^{\epsilon},\mathscr{L}_{X_{s}^{\epsilon}})-\bar{b}(\bar{X}_{s},\mathscr{L}_{\bar{X}_{s}}) |^2(t-s)^{-\gamma}\mathbf{1}_{D}\mathrm{d} s\bigg]\cr
		&\leq& 
		C_{\gamma,T}\mathbb{E}\bigg[\sup_{t\in [0,T]} \int_{0}^{t}e^{-\lambda (t-s)}e^{-\lambda s}(t-s)^{-\gamma} |\bar{b}(X_{s}^{\epsilon},\mathscr{L}_{X_{s}^{\epsilon}})-\bar{b}(\bar{X}_{s},\mathscr{L}_{\bar{X}_{s}})|^2\mathbf{1}_{D}\mathrm{d} s\bigg]\cr
		&\leq & 
		C_{\gamma,T}\mathbb{E}\bigg[\sup_{t\in [0,T]}\int_{0}^{t} e^{-\lambda (t-s)}(t-s)^{-\gamma} e^{-\lambda s}\big( |X^\epsilon_{s}-\bar{X}_{s}|^2+\mathbb{E}[|X^\epsilon_{s}-\bar{X}_{s}|^2] \big)\mathbf{1}_{D}\mathrm{d} s\bigg]\cr
		&\leq& 
		C_{\gamma,T}\mathbb{E}\big[\sup_{t\in [0,T]} e^{-\lambda t}\big( |X^\epsilon_{t}-\bar{X}_{t}|^2+\mathbb{E}[|X^\epsilon_{t}-\bar{X}_{t}|^2] \big)\mathbf{1}_{D}\big] \sup_{t\in [0, T]} \int_{0}^{t} e^{-\lambda (t-s)}(t-s)^{-\gamma} \mathrm{d} s\cr
		&\leq& 
		C_{\gamma,T} \lambda^{\gamma-1} \mathbb{E}\big[\|X^\epsilon-\bar{X}\|_{\gamma,\lambda}^2\mathbf{1}_{D}\big]
		+C_{\gamma,T} \mathbb{E}\big[\|X^\epsilon-\bar{X}\|_{\gamma,\lambda}^2\mathbf{1}_{D^c}\big].
		\end{eqnarray}
	
	To proceed, we have
	\begin{eqnarray*}
		\mathbf{A}_5
	&	=&
		\mathbb{E}\bigg[\sup_{t\in [0,T]}e^{-\lambda t}\bigg|\int_{0}^{t}( \sigma(X^\epsilon_{s})-\sigma(\bar{X}_{s}) )\mathrm{d} B^{H}_{s}\bigg|^2\bigg]
		+\mathbb{E}\bigg[\sup_{0 \leq s < t \leq T}\frac{e^{-\lambda t}}{(t-s)^{2\gamma}}\bigg|\int_{s}^{t}( \sigma(X^\epsilon_{s})-\sigma(\bar{X}_{s}) )\mathrm{d} B^{H}_{s}\bigg|^2\bigg]\cr
&=:&\mathbf{A}_{51}+\mathbf{A}_{52}.
	\end{eqnarray*}
	
	Since (\ref{grsinteg}) and Lemma \ref{lemma2.5}, we have
	\begin{align}\label{a5esti}
		&e^{-\lambda t}\bigg|\int_{s}^{t}( \sigma(X^\epsilon_{r})-\sigma(\bar{X}_{r}) )\mathrm{d} B^{H}_{r}\bigg|^2\cr
		\leq&
		C_{\alpha,\beta}e^{-\lambda t}\bigg(\int_{s}^{t}\bigg(\frac{|\sigma(X^\epsilon_{r})-\sigma(\bar{X}_{r})|}{(r-s)^{\alpha}}
		+
		\int_{s}^{r}\frac{|\sigma(X^\epsilon_{r})-\sigma(\bar{X}_{r})-\sigma(X^\epsilon_{u})+\sigma(\bar{X}_{u})|}{(r-u)^{\alpha+1}}\mathrm{d}u\bigg)\frac{\||B^H\||_{\beta}}{(t-r)^{-\alpha-\beta+1}}\mathrm{d}r\bigg)^2\cr
		\leq&
		C_{\alpha,\beta}\||B^H\||_{\beta}^2\bigg(\int_{s}^{t}e^{-\frac{\lambda (t-r)}{2}}\frac{e^{-\frac{\lambda r}{2}}|\sigma(X^\epsilon_{r})-\sigma(\bar{X}_{r})|}{(r-s)^{\alpha}}(t-r)^{\alpha+\beta-1}\mathrm{d}r\cr
		&+
		\int_{s}^{t}\int_{s}^{r}e^{-\frac{\lambda (t-r)}{2}}\frac{e^{-\frac{\lambda r}{2}}|\sigma(X^\epsilon_{r})-\sigma(\bar{X}_{r})-\sigma(X^\epsilon_{u})+\sigma(\bar{X}_{u})|(r-u)^{\gamma}}{(r-u)^{\gamma}(r-u)^{\alpha+1}}\mathrm{d}u(t-r)^{\alpha+\beta-1}\mathrm{d}r\bigg)^2\cr
		\leq&
		C_{\alpha,\beta}\||B^H\||_{\beta}^2\|X^\epsilon-\bar{X}\|_{\gamma,\lambda}^2\bigg(\int_{s}^{t}e^{-\lambda (t-r)}(r-s)^{-\alpha}(t-r)^{\alpha-1}\mathrm{d}r\bigg)\bigg(\int_{s}^{t}(r-s)^{-\alpha}(t-r)^{\alpha+2\beta-1}\mathrm{d}r\bigg)\cr
		&+
		C_{\alpha,\beta,\gamma}\||B^H\||_{\beta}^2\|X^\epsilon-\bar{X}\|_{\gamma,\lambda}^2(1+\|X^\epsilon\|_{\gamma}^2+\|\bar{X}\|_{\gamma}^2)\bigg(\int_{s}^{t}e^{-\lambda (t-r)}(r-s)^{-\alpha}(t-r)^{\alpha-1}\mathrm{d}r\bigg)\bigg(\int_{s}^{t}(r-s)^{2\gamma-\alpha}(t-r)^{\alpha+2\beta-1}\mathrm{d}r\bigg)\cr
		\leq&
		C_{\alpha,\beta}\||B^H\||_{\beta}^2(t-s)^{2\beta}\|X^\epsilon-\bar{X}\|_{\gamma,\lambda}^2\int_{s}^{t}e^{-\lambda (t-r)}(r-s)^{-\alpha}(t-r)^{\alpha-1}\mathrm{d}r\cr
		&+
		C_{\alpha,\beta,\gamma}\||B^H\||_{\beta}^2(t-s)^{2(\beta+\gamma)}\|X^\epsilon-\bar{X}\|_{\gamma,\lambda}^2(1+\|X^\epsilon\|_{\gamma}^2+\|\bar{X}\|_{\gamma}^2)\int_{s}^{t}e^{-\lambda (t-r)}(r-s)^{-\alpha}(t-r)^{\alpha-1}\mathrm{d}r
	\end{align}
	where Lemma 7.1 in \cite{rascanu2002differential} implies that
	\begin{align*}
	|\sigma(X^\epsilon_{r})-\sigma(\bar{X}_{r})-\sigma(X^\epsilon_{u})+\sigma(\bar{X}_{u})|
	\leq
	|X^\epsilon_{r}-\bar{X}_{r}-X^\epsilon_{u}+\bar{X}_{u}|
	+|X^\epsilon_{r}-\bar{X}_{r}|(|X^\epsilon_{r}-X^\epsilon_{u}|+|\bar{X}_{r}-\bar{X}_{u}|)
	\end{align*}
and by a change of variable $ v=\frac{r-s}{t-s} $, from Lemma 8 in \cite{chen2013pathwise} and the fact that $ \gamma<\beta $, it is easy to see that
	\begin{align*}
		&(t-s)^{2\beta}\int_{s}^{t}e^{-\lambda (t-r)}(r-s)^{-\alpha}(t-r)^{\alpha-1}\mathrm{d}r\cr
		&=
		(t-s)^{2\gamma}(t-s)^{2(\beta-\gamma)}\int_{0}^{1}e^{-\lambda (t-s)(1-v)}v^{-\alpha}(1-v)^{\alpha-1}\mathrm{d}v
		\leq
		(t-s)^{2\gamma}K(\lambda)
	\end{align*}
where $ K(\lambda)\rightarrow 0 $ as $ \lambda \rightarrow \infty$.

	Then, by Lemma \ref{xbounded}, we have
	\begin{eqnarray}
		\mathbf{A}_{52}
		\leq
		C_{\alpha,\beta,\gamma,T}K(\lambda)\mathbb{E}[\||B^H\||_{\beta}^2\|X^\epsilon-\bar{X}\|_{\gamma,\lambda}^2(1+\|X^\epsilon\|_{\gamma}^2+\|\bar{X}\|_{\gamma}^2)\mathbf{1}_{D}]
		\leq
		C_{\alpha,\beta,\gamma,T,R}K(\lambda)\mathbb{E}\big[\|X^\epsilon-\bar{X}\|_{\gamma,\lambda}^2\mathbf{1}_{D}\big].
	\end{eqnarray}
	
	In a similar manner than before for the first expression on $ \mathbf{A}_{52} $, we obtain
	\begin{eqnarray*}
	\mathbf{A}_{51}
	\leq
	C_{\alpha,\beta,\gamma,T,R}K(\lambda)\mathbb{E}\big[\|X^\epsilon-\bar{X}\|_{\gamma,\lambda}^2\mathbf{1}_{D}\big].
	\end{eqnarray*}
	
	Thus, we have
	\begin{eqnarray}\label{a-5}
		\mathbf{A}_{5}
		\leq
		C_{\alpha,\beta,\gamma,T,R}K(\lambda)\mathbb{E}\big[\|X^\epsilon-\bar{X}\|_{\gamma,\lambda}^2\mathbf{1}_{D}\big].
	\end{eqnarray}

	 Summing up (\ref{a-13}), (\ref{a-21}), (\ref{a-22}), (\ref{a-4}) and (\ref{a-5}) and the fact that $\mathbb{P}\big\{\tau_R < T\big\} \leq R^{-1}\mathbb{E}[\||B^{H}\||_{\beta}]$ (see Lemma 4.7 in  \citeauthor{pei2020averaging}, \citeyear{pei2020averaging}), we obtain
	\begin{eqnarray*}
		\mathbf{A}
		&\leq&
        C_{\alpha,\beta,\gamma,T,|x_{0}|}\delta^{2\beta}
        +C_{\alpha,\beta,\gamma,T,|x_0|}\delta^2+C_{\alpha,\beta,\gamma,T,|x_0|}\delta^{2-2\gamma}
        +C_{\alpha,\beta,\gamma,T,R}K(\lambda)\mathbb{E}\big[\|X^\epsilon-\bar{X}\|_{\gamma,\lambda}^2\big]\cr
        & &+
        C_{\gamma,T} \lambda^{\gamma-1} \mathbb{E}\big[\|X^\epsilon-\bar{X}\|_{\gamma,\lambda}^2\mathbf{1}_{D}\big]
        +C_{\gamma,T} \lambda^{\gamma-1} \mathbb{E}\big[\|X^\epsilon-\bar{X}\|_{\gamma,\lambda}^2\mathbf{1}_{D^c}\big]\cr
		&\leq&
		C_{\alpha,\beta,\gamma,T,|x_{0}|}\delta^{2-2\gamma}
		+C_{\alpha,\beta,\gamma,T,R,|x_0|}\big(\lambda^{\gamma-1}+K(\lambda)\big) \mathbb{E}\big[\|X^\epsilon-\bar{X}\|_{\gamma,\lambda}^2\mathbf{1}_{D}\big]+
		C_{\gamma,T}\lambda^{\gamma-1}\sqrt{R^{-1}\mathbb{E}[\||B^{H}\||_{\beta}]}.
	\end{eqnarray*}
	
	Taking $\lambda$ large enough, such that $C_{\alpha,\beta,\gamma,T,R,|x_0|}\big(\lambda^{\gamma-1}+K(\lambda)\big)\vee C_{\gamma,T}\lambda^{\gamma-1}<1$, we have
	\begin{eqnarray*}
	\mathbb{E}\big[\|X^\epsilon-\bar{X}\|_{\gamma}^2\mathbf{1}_{D}\big]
	\leq 
	C_{\alpha,\beta,\gamma,T,|x_{0}|}e^{\lambda T}\delta^{2-2\gamma}
	+
	e^{\lambda T}\sqrt{R^{-1}\mathbb{E}[\||B^{H}\||_{\beta}]}.
	\end{eqnarray*}
	
	Next, we return to the second supremum on the right-hand side of inequality (\ref{x-xbarstop}), by Cauchy-Schwarz's inequality, Lemma \ref{xbounded} and using Lemma 4.7 in  \cite{pei2020averaging} again, we have
	\begin{eqnarray*}
	\mathbb{E}\big[\|X^{\epsilon}-\bar{X}\|_{\gamma}^2\mathbf{1}_{\{\tau_R < T\}}\big] 
	\leq 
	\Big(\mathbb{E}\big[\|X^{\epsilon}-\bar{X}\|_{\gamma}^4\big]\Big)^{1/2}\mathbb{P}\big\{\tau_R < T\big\}^{1/2}
	\leq
	C_{\alpha,\beta,\gamma,T,|x_0|}\sqrt{R^{-1}\mathbb{E}[\||B^{H}\||_{\beta}]}.
	\end{eqnarray*}
	
	Summing above and let $R \rightarrow \infty$, we have 
	\begin{eqnarray*}
		\lim_{\epsilon\rightarrow0}\mathbb{E}\big[\|X^{\epsilon}-\bar{X}\|_{\gamma}^2\big]  =0.
	\end{eqnarray*}
	This completes the proof.\qed

\section*{Acknowledgments}
This work was partially supported by National Natural Science Foundation of China (NSF) under Grant No. 12172285, NSF of Chongqing under Grant No.cstc2021jcyj-msxmX0296, Shaanxi Fundamental Science Research Project for Mathematics and Physics under Grant No. 22JSQ027 and Fundamental Research Funds for the Central Universities.

\bibliographystyle{model4-names}
\biboptions{authoryear}
\bibliography{reference2}







\end{document}